\newtheorem{theorem}{Theorem}[section]
\newtheorem{lemma}[theorem]{Lemma}
\newtheorem{corollary}[theorem]{Corollary}
\newtheorem{proposition}[theorem]{Proposition}
\newtheorem{remark}[theorem]{Remark}
\newtheorem{definition}[theorem]{Definition}
\newtheorem{deftheorem}[theorem]{Theorem-Definition}
\newcommand{\E}{\mathbb{E}}
\newcommand{\A}{\mathcal{A}}
\newcommand{\B}{\mathcal{B}}
\newcommand{\V}{\mathcal{V}}
\newcommand{\C}{\mathbb{C}}
\newcommand{\p}{{\bf P}}
\newcommand{\ii}{{\bf i}}
\newcommand{\jj}{{\bf j}}
\begin{document}
\title[De Finetti Theorems]{ General  de Finetti type theorems in noncommutative probability}
\author{Weihua Liu}
\maketitle
\begin{abstract}
We prove general de Finetti type theorems for classical and free independence.
The de Finetti type theorems work for all  non-easy quantum groups, which generalize a recent work of Banica, Curran and Speicher.  
We determine maximal distributional  symmetries which means the corresponding de Finetti type theorem  fails if a sequence of random variables satisfy more symmetry relations other than the maximal one.  
In addition, we define Boolean quantum semigroups in analogous to the easy quantum groups, by universal conditions on matrix coordinate generators and an orthogonal projection.   
Then, we show a general de Finetti type theorem for Boolean independence.
\end{abstract}

\section{Introduction}
The area of distributional symmetries is one of the richest of modern probability theory.  
The most obvious problem in this area is to characterize the class of objects of a given type with a specified symmetric property of their joint distribution. 
For example, de Finetti's fundamental theorem states  that an infinite sequence of random variables, whose joint distribution is invariant under all finite permutations, is conditionally independent and identically distributed.  
Later, in \cite{Freedman}, rotatability and other continuous symmetries were considered by Freedman.  One  can see \cite{Ka1} for more details.

Exchangeability and rotatability are classical symmetries associated with permutation groups and orthogonal groups. 
The quantum analogue of permutation and orthogonal groups were introduced by Wang \cite{Wan2, Wan}. 
They are compact quantum groups in the sense of Woronowicz's matrix pseudogroups \cite{Wo2,Wo}. 
 By using symmetries associated with quantum permutation groups, K\"ostler and Speicher discovered a free analogue of classical de Finetti theorem \cite{KS}:  an infinite sequence of noncommutative random variables are invariant  under quantum permutations is equivalent to the fact that the random variables are identically distributed and free with respect to the conditional expectation onto their tail algebra.
A free analogue of Freedman's work on rotatability was given by Curran in \cite{Cu}.

In \cite{BS}, both classical symmetries and quantum symmetries are studied in the \lq\lq easiness\rq\rq formalism. 
Roughly speaking, those  structures are quantum groups associated with tensor categories of partitions. 
 For each $n$, it was shown that there are  exactly six easy groups which are denoted by $S_n$, $O_n$, $B_n$, $H_n$, $B'_n$ and $S'_n$. 
  We will denote the algebras of  continuous functions on these groups by $C_s(n)$, $C_o(n)$, $C_b(n)$, $C_h(n)$, $C_{b'}(n)$ and $C_{s'}(n)$, respectively.
  In the quantum aspects, for each $n$, together with a work of Weber \cite{Weber1}, there are exactly seven easy quantum groups which are denoted by $A_s(n)$, $A_o(n)$, $A_b(n)$, $A_h(n)$, $A_{s'}(n)$, $A_{b'}(n)$ and $A_{b^\#}(n)$.  
  All these algebras are generated by $n^2$ matrix coordinates $\{u_{i,j}|i,j=1,...,n\}$ which satisfy certain relation $R$.  The relations $R$ for $C_*(n)$ and $A_*(n)$ are suitable such that all these algebras are Hopf $C^*$-algebras in the sense of Woronowicz \cite{Wo2}.
 The distributional symmetries associated with Woronowicz's $C^*$-algebras are defined via coactions of quantum groups on noncommutative polynomials in the sense of So{\l}tan \cite{So1}.
  Among these symmetries,   Banica, Curran and Speicher studied de Finetti type theorems for $C_s(n)$, $C_o(n)$, $C_b(n)$, $C_h(n)$ and $A_s(n)$, $A_o(n)$, $A_b(n)$, $A_h(n)$ \cite{BCS}. 
 In short, these symmetries can characterize independence relations which are classical or free.   Furthermore,  it determines some special distributions which are symmetric, shifted central limit or centered central limit laws.   
 One goal of this paper is to study de Finetti type theorems for all  compact quantum groups which are either between $C_s(n)$ and $ C_o(n)$ or between $A_s(n)$ and $ A_o(n)$. 
 
In \cite{RN},  Ryll-Nardzewski showed that  de Finetti theorem holds under the weaker condition of spreadability. 
It shows that  different kinds of distributional symmetries may give the same de Finetti type theorem.  
In this paper,   we study when different kinds of distributional symmetries give the same de Finetti type theorem.
We show that there is no distribution other than  what $C_s(n)$, $C_o(n)$, $C_b(n)$, $C_h(n)$ and $A_s(n)$, $A_o(n)$, $A_b(n)$, $A_h(n)$ can characterize.  
On the other hand, we will show that these distributional symmetries are maximal which means the corresponding de Finetti type theorem  fails if a sequence  of random variables satisfy more symmetries other than the maximal one.

In \cite{Sp, SW}, it was shown that there is a unique non-unital independence, which is called Boolean independence, in noncommutative probability.  
 The study of distributional symmetries for Boolean independence was started  in \cite{Liu}.  
 We constructed a family of quantum semigroups in analogues to Wang's quantum permutation groups and defined their coactions on the joint distribution of a sequence of random variables.
  It  was shown that the distributional symmetries associated with those coactions can be used to characterize  Boolean independence in a proper framework. 
  In a recent work of Hayase \cite{Hay}, following the idea of Banica and Speicher, many distributional symmetries related to Boolean independence  were constructed via the category of interval partitions.  
  By using those distributional symmetries,
   Hayase found de Finetti type theorems for a Boolean analogue of the easy quantum groups.
We will define  quantum semigroups, which are related to Boolean independence, in analogous to the easy quantum groups via some universal conditions.  
The general de Finetti type theorem for Boolean independence is not complete. 
However, with the method introduced in the paper, it is still possible to check de Finetti type theorems for some quantum semigroups other than the Boolean analogue of the easy quantum groups.
 
In the following general de Finetti type theorems, $A\subseteq B$ can be understood  that  all distributional symmetries associated with $A$ are distributional symmetries associated with $B$. 
Unlike the  unified theorem in \cite{BCS}, we divide our de Finetti type theorems in to two cases since we should take account of Gaussian distributions whose support is non-compact.

\begin{theorem}\label{Classical}\normalfont
Let $(\Omega,\Sigma, \mu)$ be a classical probability space and  $(x_i)_{i\in\mathbb{N}}$ be a sequence of real-valued  random variables such that $x_i\in\bigcap\limits_{1\leq p<\infty} L^{p}(\Omega,d\mu)$ for all $i$.
 Let $\A$ be the algebra of all complex-valued random variables and $\phi$ is the classical expectation. Assume that $(x_i)_{i\in\mathbb{N}}$ generate $\A$.
Let $\{E(n)\}_{n\in\mathbb{N}}$ be a sequence of orthogonal Hopf algebras such that $C_s(n)\subseteq E(n)\subseteq C_o(n)$ for each $n\in \mathbb{N}$. If the joint distribution of $(x_i)_{i\in\mathbb{N}}$ is $E(n)$-invariant, then there are a subalgebra $1\subseteq \B\subseteq \A$  and a $\phi$-preserving conditional expectation $\E:\A\rightarrow \B$ such that
\begin{itemize}
\item[1.]  If $E(n)=C_s(n)$ for all $n$, then $(x_i)_{i\in\mathbb{N}}$ are conditionally independent and identically distributed  with respect to $\E$.
\item[2.]  If  $C_s(n) \subseteq E(n) \subseteq C_h(n)$ for all $n$ and there exists a $k$ such that $E(k)\neq C_s(k)$, then $(x_i)_{i\in\mathbb{N}}$ are conditionally independent and have  identical symmetric distribution  with respect to $\E$.
\item[3.]  If  $C_s(n) \subseteq E(n) \subseteq C_b(n)$ for all $n$ and there exists a $k$ such that $E(k)\neq C_s(k)$, then $(x_i)_{i\in\mathbb{N}}$ are conditionally independent and have  identical  shifted-Gaussian distribution  with respect to $\E$.
\item[4.]  If  there exist  $k_1, k_2$ such that $E(k_1)\not\subseteq C_h(k_1)$  and $E(k_2)\not\subseteq C_b(k_2)$, then $(x_i)_{i\in\mathbb{N}}$ are conditionally independent and have  centered Gaussian  distribution  with respect to $\E$.
\end{itemize}
\end{theorem}

The following theorem is for noncommutative probability. We will only consider bounded random variables.
\begin{theorem}\label{Main theorem}\normalfont Let $(\A, \phi)$ be a $W^*$-probability space and $(x_i)_{i\in\mathbb{N}}$ be a sequence of random variables which generate $\A$. 
\begin{itemize}
\item 
Suppose that $\phi$ is faithful.  Let $\{E(n)\}_{n\in\mathbb{N}}$ be a sequence of orthogonal Hopf algebras such that $A_s(n)\subseteq E(n)\subseteq A_o(n)$ for each $n$. If the joint distribution of $(x_i)_{i\in\mathbb{N}}$ is $E(n)$-invariant, then there are a $W^*$-subalgebra $1\subseteq \B\subseteq \A$  and a $\phi$-preserving conditional expectation $\E:\A\rightarrow \B$ such that
\begin{itemize}
\item[1.]  If $E(n)=A_s(n)$ for all $n$, then $(x_i)_{i\in\mathbb{N}}$ are freely independent and identically distributed  with respect to $\E$.
\item[2.]  If  $A_s(n) \subseteq E(n) \subseteq A_h(n)$ for all $n$ and there exists a $k$ such that $E(k)\neq A_s(k)$, then $(x_i)_{i\in\mathbb{N}}$ are freely independent and have  identical symmetric distribution  with respect to $\E$.
\item[3.]  If  $A_s(n) \subseteq E(n) \subseteq A_b(n)$ for all $n$ and there exists a $k$ such that $E(k)\neq A_s(k)$, then $(x_i)_{i\in\mathbb{N}}$ are conditionally independent and have  identical  shifted-semicircular distribution  with respect to $\E$.
\item[4.]  If  there exist  $k_1, k_2$ such that $E(k_1)\not\subseteq A_h(k_1)$  and $E(k_2)\not\subseteq A_b(k_2)$, then $(x_i)_{i\in\mathbb{N}}$ are freely independent and have  centered semicircular  distribution  with respect to $\E$.
\end{itemize}

\item 
Suppose that  $\phi$ is non-degenerated.  Let $\{E(n)\}_{n\in\mathbb{N}}$ be a sequence of orthogonal Boolean quantum semigroups such that $B_s(n)\subseteq E(n)\subseteq B_o(n)$ for each $n$. If the joint distribution of $(x_i)_{i\in\mathbb{N}}$ is $E(n)$-invariant, then there are a $W^*$-subalgebra (does not necessarily contains the unit of $\A$) $\B\subseteq \A$ and a $\phi$-preserving conditional expectation $\E:\A\rightarrow \B$ such that
\begin{itemize}
\item[1.]  If $E(n)=B_s(n)$ for all $n$, then $(x_i)_{i\in\mathbb{N}}$ are Boolean independent and identically distributed  with respect to $\E$.
\item[2.]  If  $B_s(n) \subseteq E(n) \subseteq B_h(n)$ for all $n$ and there exists a $k$ such that $E(k)$ has a quotient algebra $E'(k)$ that $A_s(k)\not\subseteq E'(k)\subseteq A_n(n) $, then $(x_i)_{i\in\mathbb{N}}$ are Boolean independent and have  identical symmetric distribution  with respect to $\E$.
\item[3.]  If  $B_s(n) \subseteq E(n) \subseteq B_b(n)$ for all $n$ and there exists a $k$ such that $E(k)$ has a quotient algebra $E'(k)$ that $A_s(k)\not\subseteq E'(k)\subseteq A_b(n) $, then $(x_i)_{i\in\mathbb{N}}$ are Boolean independent and have  identical  shifted-Bernoulli distribution  with respect to $\E$.
\item[4.]  If  there exist  $k_1, k_2$ such that $E(k_1)$ and $E(k_2)$ have  quotient algebras $E'(k_1)\subseteq A_o(k_1)$ and $ E'(k_2)\subseteq A_o(k_2)$ such that $ E(k_1)\not\subseteq A_h(k_1)$   and $E'(k_2)\not\subseteq A_b(k_2)$, then $(x_i)_{i\in\mathbb{N}}$ are conditionally independent and have  centered Bernoulli distribution  with respect to $\E$.
\end{itemize}

\end{itemize}

\end{theorem}

Besides this introductory section, the rest of this paper is organized as follows.  
In Section 2, we recall some definitions in noncommutative probability and some combinatorial tools. 
In Section 3, we recall the orthogonal Hopf algebras and study their properties.
In Section 4, we define Boolean quantum semigroups in analogous to the easy quantum groups via certain universal conditions.
In Section 5, we give the proof of our main theorems and some applications.

\section{Preliminaries and examples}
In this section, we recall some necessary definitions and notation in noncommutative probability. For further details, see texts   \cite{KS,Liu, NS, VDN}.
\subsection{Noncommutative probability} This part is for noncommutative probability theory and universal independence relations.
\begin{definition} \normalfont
A noncommutative probability space is a pair $(\A,\phi)$, where $\A$ is a unital algebra and $\phi:\A\rightarrow \mathbb{C}$ is a linear functional such that $\phi(1_{\A})=1$. 
$(\A,\phi)$ is a commutative probability space if $\A$ is commutative.
$(\A,\phi)$ is a $C^*$-probability space if $\A$ is a $C^*$-algebra and $\phi$ is a state, i.e. norm one  positive linear functional. 
$(\A,\phi)$ is a $W^*$-probability space if $\A$ is a $W^*$-algebra and $\phi$ is a normal state.
$\phi$ is tracial  if $\phi(xy)=\phi(yx)$ for all $x,y\in \A$.
Assume that $\A$ is a $W^*$-probability space, $\phi$ is faithful if $\phi(xx^*)=0$ if and only if $x=0$,
$\phi$ is  non-degenerated if the GNS representation associated with  $\phi$ is faithful.
The elements in $\A$ are called noncommutative random variables.  
Let $x\in\A$ be a random variable, the distribution of $x$ is a linear functional $\mu_x$ on $\C[X]$ such that $$\mu_x(P)=\phi(P(x))$$ for all $P\in\C[X]$, where $\C[X]$ is the set of complex-coefficient polynomials in one variable.
\end{definition}

\begin{remark}\normalfont
In this paper, the commutative probability space $\A$ is assumed to be set of all complex-valued random variables from $(\Omega,\mu)$, where  $\Omega$ is a classical probability space. It is well know that $\A$ is an algebra and $\phi(\cdot)=\int\cdot d\mu$ defines a linear functional on $\bigcap\limits_{1\leq p<\infty} L^{p}(\Omega)$. In this case, the supports of random variables are not necessarily compact.
When $(\A,\phi)$ is  a $W^*$-probability space where $\phi$ is faithful and tracial,  we can also consider non-compact supported noncommutative random variables.
\end{remark}

\begin{definition}\normalfont Let $I$ be an index set.
 The algebra of noncommutative polynomials in $|I|$ variables, $\C\langle X_i|i\in I\rangle$, is the  linear span of $1$ and noncommutative monomials of the form $X^{k_1}_{i_1}X^{k_2}_{i_2}\cdots X^{k_n}_{i_n}$ with $i_1\neq i_2\neq\cdots \neq i_n\in I$ and all $k_j$'s are positive integers. 
 For convenience, we denote by $\C\langle X_i|i\in I \rangle_0$  the set of noncommutative polynomials without a constant term.
Let $(x_i)_{i\in I}$ be a family of random variables in a noncommutative probability space $(\A,\phi)$.
 Their  joint distribution is a linear functional $\mu:\C\langle X_i|i\in I \rangle\rightarrow \C$ defined by
$$\mu(X^{k_1}_{i_1}X^{k_2}_{i_2}\cdots X^{k_n}_{i_n})=\phi(x^{k_1}_{i_1}x^{k_2}_{i_2}\cdots x^{k_n}_{i_n}),$$
and $\mu(1)=1$.
\end{definition}

\begin{remark}\normalfont
The joint distribution of random variables depends on their order. For example, when $I=\{1,2\}$, then $\mu_{x_1,x_2}$ may not equal $\mu_{x_2,x_1}$. According to our notation, $\mu_{x_1,x_2}(X_1X_2)=\phi(x_1x_2)$, but $\mu_{x_2,x_1}(X_1X_2)=\phi(x_2x_1)$.
\end{remark}

Now, we turn to define  operator-valued probability spaces  which is  a generalization of scalar-valued noncommutative probability spaces.

\begin{definition} \normalfont An operator valued probability space $(\A,\B, \E:\A\rightarrow \B)$ consists of an algebra  $\A$, a subalgebra $\B$ of $\A$ and a $\B-\B$ bimodule linear map $\E:\A\rightarrow \B$ i.e.
 $$\E[b_1ab_2]=b_1\E[a]b_2,  \,\,\, \E[b]=b$$
for all $b_1,b_2,b\in\B$ and $a\in\A$. According to the definition in   \cite{St}, we call $\E$ a conditional expectation from $\A$ to $\B$ if $\E$ is onto, i.e. $\E[\A]=\B$. The elements of $\A$ are called $\B$-valued random variables or random variables for simple.
\end{definition}

\begin{definition}\normalfont Given an algebra $\B$, we denote by $\B\langle X\rangle$  the algebra which is freely generated by $\B$ and the indeterminant $X$. Let $1_X$ be the identity of $\B\langle X\rangle$, then $\B\langle X\rangle$ is set of linear combinations of the elements in $\B$ and the noncommutative monomials $b_0Xb_1Xb_2\cdots b_{n-1}Xb_n$ where $b_k\in\B\cup \{\C 1_X\}$ and $n\geq 0$. The elements in  $\B\langle X\rangle$ are called $\B$-polynomials. In addition,  $\B\langle X\rangle_0$ denotes the subalgebra of $\B\langle X\rangle$ which does not contain a constant term, i.e. the linear span of the noncommutative monomials $b_0Xb_1Xb_2\cdots b_{n-1}Xb_n$ where $b_k\in\B\cup \{\C 1_X\}$ and $n\geq 1$. 
\end{definition}
Operator-valued independence relations are defined as follows:
\begin{definition}\normalfont
Given an operator valued probability space $(\A,\B,\E:\A\rightarrow \B)$ such that $\A$ and $\B $ are unital. 
\begin{itemize}
\item
Suppose that $\A$ is commutative. A family of unital subalgebras $\{\A_i\supset \B \}_{i\in I}$  are said to be conditionally independent with respect to $\E$ if 
$$\E[a_1\cdots a_n]=\E[a_1]\E[a_2]\cdots \E[a_n],$$
whenever  $a_k\in \A_{i_k}$ and $i_1,...,i_n$ are pairwisely different. A family of random variables $ (x_i)_{i\in I}$ are said to be conditionally independent over $\B$ if the unital subalgebras $\{\A_i\}_{i\in I}$ which are generated by $x_i$ and $\B$ respectively are conditionally independent, or equivalently 
$$\E[p_1(x_{i_1})p_2(x_{i_2})\cdots p_n(x_{i_n})]=\E[p_1(x_{i_1})]\E[p_2(x_{i_2})]\cdots \E[p_n(x_{i_n})],$$
whenever $i_1,..., i_n$ are pairwisely different and $p_1,...,p_n\in \B\langle X\rangle$.\\

\item
 A family of unital subalgebras $\{\A_i\supset \B \}_{i\in I}$  are said to be freely independent with respect to $\E$ if 
$$\E[a_1\cdots a_n]=0,$$
whenever $i_1\neq i_2\neq \cdots\neq i_n$, $a_k\in \A_{i_k}$ and $\E[a_k]=0$ for all $k$. A family of random variables $ (x_i)_{i\in I}$ are said to be freely independent over $\B$, if the unital subalgebras $\{\A_i\}_{i\in I}$ which are generated by $x_i$ and $B$ respectively are freely independent, or equivalently 
$$\E[p_1(x_{i_1})p_2(x_{i_2})\cdots p_n(x_{i_n})]=0,$$
whenever $i_1\neq i_2\neq \cdots\neq i_n$, $p_1,...,p_n\in \B\langle X\rangle$ and $\E[p_k(x_{i_k})]=0$ for all $k$.\\

\item A family of unital subalgebras $\{\A_i\supset \B \}_{i\in I}$  are said to be Boolean independent with respect to $\E$ if 
$$\E[a_1\cdots a_n]=\E[a_1]\E[a_2]\cdots \E[a_n],$$
whenever  $a_k\in \A_{i_k}$ and $i_1\neq i_2\neq\cdots\neq i_n$.    A family of random variables $(x_i)_{i\in I}$ are said to be Boolean independent over $\B$, if the non-unital subalgebras $\{\A_i\}_{i\in I}$ which are generated by $x_i$ and $B$ respectively are Boolean independent, or equivalently 
$$\E[p_1(x_{i_1})p_2(x_{i_2})\cdots p_n(x_{i_n})]=\E[p_1(x_{i_1})]\E[p_2(x_{i_2})]\cdots \E[p_n(x_{i_n})],$$
whenever $i_1\neq i_2\neq\cdots\neq i_n$ and $p_1,...,p_n\in\B\langle X\rangle_0$.
\end{itemize}
\end{definition}

\begin{remark}\normalfont
When $\B=\C$ in the above definition, we get the independence relations for scalar-valued probability.
\end{remark}

\subsection{Partitions and cumulants} In this subsection, we recall some combinatorial tools for classical, free and Boolean independence. See  \cite{AHLV, Lehner, Sp1} for more details.

\begin{definition}\label{partition}\normalfont Let $S$ be an ordered set:
\begin{itemize}
\item[1.] A partition $\pi$ of a set $S$ is a collection of disjoint, nonempty sets $V_1,...,V_r$ whose union is $S$.  $V_1,...,V_r$ are called blocks of $\pi$. 
The collection of all partitions of $S$ will be denoted by $P(S)$.
\item[2.] Given two partitions $\pi$, $\sigma$, we say $\pi\leq \sigma$ if each block of $\pi$ is contained in a block of $\sigma$.
\item[3.]A partition $\pi\in P(S)$ is noncrossing if there is no quadruple $(s_1,s_2,r_1,r_2)$ such that $s_1<r_1<s_2<r_2$, $s_1,s_2\in V$, $r_1,r_2\in W$ and $V,W$ are two different blocks of $\pi$. 
\item[4.]A partition $\pi\in P(S)$ is interval  if there is no triple $(s_1,s_2,r)$ such that $s_1<r<s_2$, $s_1,s_2\in V$, $r\in W$ and $V,W$ are two different blocks of $\pi$. 
\item[5.] Let ${\bf i}=(i_1,...,i_k)$ be a sequence of indices of $I$ and $[k]=\{1,...,k\}$. We denote by ker $\ii$ the element of $P([k])$ whose blocks are the equivalence classes of the relation
$$s\sim t\Leftrightarrow i_s=i_t$$
\end{itemize}
\end{definition}
\begin{remark}\normalfont It is obvious that interval partitions are noncrossing.
\end{remark}

\begin{definition}\normalfont Let $(\A, \B, \E:\A\rightarrow\B)$ be an operator valued probability space:
\begin{itemize}
\item[1.]  A $\B$-functional is a $n$-linear map $\rho:\A^{ n}\rightarrow \B$ such that
$$\rho(b_0a_1b_1,a_2b_2,...,a_nb_n)=b_0\rho(a_1,b_1a_2,...,b_{n-1}a_n)b_n$$
for all $b_0,...,b_n\in \B\cup\{1_A\}$.
\item[2.]For $ k\in\mathbb{N}$, let $\rho^{(k)}$ be a $\B$-functional from $\A^k$ to $\B$.
Suppose that $\A$ is commutative.  Given $\pi\in P(n)$, we can define a $\B$-functional $\rho^{(\pi)}:\A^n \rightarrow \B$ by the formula:
$$\rho^{(\pi)}(a_1,...,a_n)=\prod\limits_{V\in \pi} \rho{(V)}(a_1,...,a_n),$$
where if $V=(i_1<i_2<\cdots<i_s)$ is a block of $\pi$ then 
$$\rho(V)(a_1,...,a_n)=\rho^{(s)}(a_{i_1},...,a_{i_s}). $$
If $\A$ is not commutative, then there is no natural way to define $\rho^{(\pi)}(a_1,...,a_n)$ for $\pi\not\in NC(n)$.
 For $\pi\in NC(n)$,  the $\B$-functional $\rho^{(\pi)}: \A^n\rightarrow \B$ can be defined recursively as follows:
$$\rho^{(\pi)}(a_1,...,a_n)=\rho^{(\pi\setminus V)}(a_1,...,a_l\rho^{(s)}(a_{l+1,...,a_{l+s}}),a_{l+s+1},...,a_n)$$
where $V=(l+1,l+2,...,l+s)$ is an interval block of $\pi$.
\end{itemize}
\end{definition}

\begin{definition}\normalfont
Let $(\A, \B, \E:\A\rightarrow \B)$ be an operator-valued probability space:
\begin{itemize}
\item[1.]  If $\A$ is commutative, then the operator-valued classical cumulants $c^{(n)}_{\E}:\A^n\rightarrow \B$ are defined by the classical moment-cumulant formula:
$$\E[a_{1}\cdots a_{n}]=\sum\limits_{\pi\in P(n)}c_\E^{(\pi)}(a_{1},...,a_{n}),$$
for all $a_1,...,a_n\in \A$. 
\item[2.] The operator-valued  free cumulants $\kappa_\E^{(k)}:\A^n\rightarrow \B$ are defined by the free moment-cumulant formula:
$$\E[a_{1}\cdots a_{n}]=\sum\limits_{\pi\in NC(n)}\kappa_\E^{(\pi)}(a_{1},...,a_{n}),$$
for all $a_1,...,a_n\in \A$. 

\item[3.] The operator-valued  Boolean cumulants $b_\E^{(k)}:\A^n\rightarrow \B$ are defined by the Boolean moment-cumulant formula:
$$\E[a_{1}\cdots a_{n}]=\sum\limits_{\pi\in I(n)}b_\E^{(\pi)}(a_{1},...,a_{n}),$$
for all $a_1,...,a_n\in \A$. 
\end{itemize} 
Note that  all these three types of cumulants can be resolved recursively, e.g. 
$$c_{\E}^{(1)}(a_1)=\E[a_1]$$
and 
$$ c_\E^{(n)}(a_{1},...,a_{n})=\E[a_{1}\cdots a_{n}]-\sum\limits_{\pi\in P(n), \pi\neq 1_n}c_\E^{(\pi)}(a_{1},...,a_{n}),$$
where $c_\E^{(\pi)}(a_{1},...,a_{n})$ depends on $c_\E^{(k)}(a_{1},...,a_{n})$ for $k=1,...,n-1$ if $\pi\neq 1_n$. 
Similarly, we  determine $\kappa_\E^{(n)}$ and $b_\E^{(n)}$ by substituting  $P(n)$ for $NC(n)$ and $I(n)$, respectively.

\end{definition}

See \cite{BCS2} and \cite{Po2}, we have  the following vanishing-cumulant conditions for independence relations.
\begin{theorem}\normalfont
Let $(\A,\B,\E:\A\rightarrow \B)$ be an operator-valued probability space and $(x_i)_{i\in I}$ be a family of random variables in $\A$:
\begin{itemize}
\item[1.] If $\A$ is is commutative,  then $(x_i)_{i\in I}$ are conditionally independent with respect to $\E$ iff
$$c_\E^{(n)}(b_0x_{i_1}b_1,...,x_{i_n}b_n)=0,$$
whenever $i_k\neq i_l$ for some $1\leq k,l \leq n$.
\item[2.] $(x_i)_{i\in I}$ are   free independent with respect to $\E$ iff 
$$\kappa_\E^{(n)}(b_0x_{i_1}b_1,...,x_{i_n}b_n)=0,$$
whenever $i_k\neq i_l$ for some $1\leq k,l \leq n$.
\item[3.]  $(x_i)_{i\in I}$ are  Boolean independent with respect to $\E$ iff
$$b_\E^{(n)}(b_0x_{i_1}b_1,...,x_{i_n}b_n)=0,$$
whenever $i_k\neq i_l$ for some $1\leq k,l \leq n$.
\end{itemize}
\end{theorem}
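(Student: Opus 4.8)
\emph{Approach.} The plan is to derive each of the three equivalences by inverting the corresponding moment--cumulant formula and then determining which partitions can survive: the forward implications go by strong induction on the number $n$ of arguments, the backward ones by a direct expansion. The classical and boolean cases are parallel and elementary, while the free case is Speicher's characterisation of operator-valued freeness by vanishing of mixed cumulants, for which I would follow \cite{Sp1} (see also \cite{NS, Lehner, AHLV}) and only adapt the bookkeeping to the $\B$-valued framework above. For the forward direction in the classical case, fix $a_k:=b_{k-1}x_{i_k}b_k$ with indices $i_1,\dots,i_n$ not all equal and $n\geq 2$ ($n=1$ being vacuous); since $\A$ is commutative, $c_E^{(n)}$ is symmetric in its arguments, so after reordering to group equal indices we let $\gamma\in P(n)$ be the partition of positions by index value and $A_g$ the product of the entries carrying index $g$, and conditional independence gives $E[a_1\cdots a_n]=\prod_g E[A_g]=\sum_{\pi\leq\gamma}c_E^{(\pi)}(a_1,\dots,a_n)$, using that over the commutative $\B$ the functional $c_E^{(\pi)}$ is multiplicative and factors over $\gamma$. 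Comparing with $E[a_1\cdots a_n]=\sum_{\pi\in P(n)}c_E^{(\pi)}(a_1,\dots,a_n)$ forces $\sum_{\pi\not\leq\gamma}c_E^{(\pi)}(a_1,\dots,a_n)=0$; each such $\pi$ has a block meeting two distinct index values, which for $\pi\neq 1_n$ has fewer than $n$ elements and so contributes $0$ by the induction hypothesis, leaving $c_E^{(n)}(a_1,\dots,a_n)=0$. The boolean case is identical with $P(n)$ replaced by $I(n)$ and $\gamma$ taken to be the partition into maximal runs of equal consecutive indices (at least two blocks), using boolean independence over $\B$ to factor $E[a_1\cdots a_n]$ as the ordered product of the $E$-values of the runs and the fact that $b_E^{(\pi)}$ is the ordered product of its block cumulants for interval $\pi$.

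\emph{Forward direction, free case.} Here one splits into two subcases. If some consecutive pair of indices coincides, merge those two entries into a single slot: the resulting length-$(n-1)$ cumulant again has not-all-equal indices, hence vanishes by induction, and the formula for free cumulants with products as entries re-expands it as $\kappa_E^{(n)}(a_1,\dots,a_n)$ plus a sum of terms $\kappa_E^{(\sigma)}(a_1,\dots,a_n)$ with $\sigma\neq 1_n$; since the only join-edges available for connecting across the pre-merged pair come from $\sigma$ itself, each such $\sigma$ has a block meeting two index values and so contributes $0$ by induction, whence $\kappa_E^{(n)}(a_1,\dots,a_n)=0$. If instead the indices are alternating, use that $\kappa_E^{(n)}$ annihilates any $\B$-valued argument (immediate from $\B$-linearity and $\kappa_E^{(1)}=E$), so that centring the entries leaves $\kappa_E^{(n)}(a_1,\dots,a_n)$ unchanged; then in $E[\mathring a_1\cdots\mathring a_n]=\sum_{\pi\in NC(n)}\kappa_E^{(\pi)}(\mathring a_1,\dots,\mathring a_n)$ with $\mathring a_k:=a_k-E[a_k]$, every $\pi$ with a singleton dies by centring, every $\pi$ with a bichromatic block of size $<n$ dies by induction, and a non-crossing partition of an alternating word all of whose blocks are monochromatic of size $\geq 2$ cannot exist (its innermost interval block would have two equal consecutive indices), so only $\pi=1_n$ survives and $\kappa_E^{(n)}(a_1,\dots,a_n)=\kappa_E^{(n)}(\mathring a_1,\dots,\mathring a_n)=E[\mathring a_1\cdots\mathring a_n]=0$ by freeness.

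\emph{Backward direction.} Now assume the displayed mixed cumulants vanish. In the classical (resp. boolean) case, $\B$-multilinearity reduces proving conditional (resp. boolean) independence over $\B$ to evaluating $E$ on a product of $\B$-monomials $p_j(x_{i_j})$ with pairwise distinct (resp. alternating) indices; expanding this into the underlying word in the $x$'s and applying the classical (resp. boolean) moment--cumulant formula, the hypothesis forces every contributing partition to be monochromatic (each block carrying a single index), and since distinct $p_j$'s carry distinct indices (resp. lie in distinct maximal runs) such a partition refines the partition into the $p_j$-blocks, so $E$ of the product factors as $\prod_j E[p_j(x_{i_j})]$. In the free case I would first promote the hypothesis to $\kappa_E^{(m)}(c_1,\dots,c_m)=0$ for arbitrary $c_j\in\A_{i_j}$ with not-all-equal indices: reducing $\B$-valued entries away and writing each remaining $c_j$ as a linear combination of products of generators, the cumulants-with-products formula expresses $\kappa_E^{(m)}(c_1,\dots,c_m)$ as a sum of terms $\kappa_E^{(\sigma)}$, and in the connected graph built from $\sigma$ and the partition grouping the generators of each $c_j$ there must be a $\sigma$-edge between vertices of different index, so each $\sigma$ has a bichromatic block and each term vanishes by the hypothesis. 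Granting this, freeness over $\B$ follows as in the alternating subcase above: in $E[\mathring a_1\cdots\mathring a_n]=\sum_{\pi\in NC(n)}\kappa_E^{(\pi)}(\mathring a_1,\dots,\mathring a_n)$ the bichromatic blocks (including $1_n$, since the word is alternating) are killed by the promoted hypothesis, the singletons by centring, and monochromatic partitions without singletons do not occur for an alternating word, so the sum is empty and $E[\mathring a_1\cdots\mathring a_n]=0$.

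\emph{Main obstacle.} The delicate point is the free case with a genuinely noncommutative $\B$. Because $\kappa_E^{(\pi)}$ for $\pi\in NC(n)$ is defined by the nested recursion that inserts inner block cumulants as $\B$-coefficients inside the outer blocks, rather than as a plain product of block cumulants, one must check that ``a bichromatic block forces $\kappa_E^{(\pi)}$ to vanish'' survives this recursion; it does, since the $\B$-coefficients produced never change the indices of the arguments (so a decorated generator is still of the form $b_0x_{i}b_1$), hence the bichromatic block is eventually evaluated as a lower-order mixed cumulant and the resulting $0$ propagates by multilinearity. The other care point is that the backward direction in the free case genuinely needs the operator-valued version of the cumulants-with-products formula. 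By contrast, the classical case quietly uses that $\B$ is commutative so that $c_E^{(\pi)}$ is multiplicative over all of $P(n)$ --- automatic here since $\A$, hence $\B$, is commutative --- and the boolean case needs no commutativity at all, since interval blocks lie side by side and $b_E^{(\pi)}$ is simply their ordered product.
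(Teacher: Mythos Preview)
Your proposal is correct and considerably more detailed than the paper's own treatment, which simply records that the classical case is well known, the free case is Speicher's theorem \cite{Sp1}, and the scalar boolean case is Lehner's \cite{Lehner}, and then gives only a short sketch of the forward implication in the operator-valued boolean case. That sketch is a special case of your boolean argument: rather than decomposing into all maximal runs, the paper picks a single position $l$ with $i_l\neq i_{l+1}$, uses boolean independence to factor $E[x_{i_1}b_1\cdots x_{i_n}b_n]$ there, compares the two moment--cumulant expansions, and observes (implicitly by induction) that $b_E^{(n)}$ is the only surviving term in the difference. Your free-case argument via the two subcases (merge a consecutive equal pair using the products-as-entries formula, or centre an alternating word) is exactly the standard route in the references the paper cites, so the approaches coincide; you have just written out what the paper delegates to the literature.
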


The following theorem gives  combinatorial characterizations for joint distributions of classical,  free and Boolean independence.  For classical and free independence,  see \cite{BCS2}.  For Boolean independence, see \cite{Hay,Po2}.

\begin{deftheorem}\label{parititionanddistribution}\normalfont
Let $(\A,\B,\E:\A\rightarrow \B)$ be an operator-valued probability space, and $(x_i)_{i\in I}$ be a family of random variables in $\A$:

1. If $\A$ is is commutative,  then the $\B$-valued joint distribution of $(x_i)_{i\in I}$ has the property corresponding to $D$ in the table below  iff for any $\pi\in P(n)$.
$$c_\E^{(\pi)}(b_0x_{i_1}b_1,...,x_{i_n}b_n)=0,$$
unless  $\pi\in D(n)$ and $\pi\leq ker\ii$, where $\ii=(i_1,...,i_n)$.
\begin{table}[h!]
  \begin{center}
    \begin{tabular}{|lcl|}
    \hline
      {\bf Partitions D}& \hspace{3cm} &  {\bf Joint distribution}\\
      \hline
      $P$: All partitions  & & Classical independent\\
      $P_h$: Partitions with even block sizes & & Classical independent and symmetric\\
      $P_b$: Partitions with block size 1 or 2 & & Classical independent and Gaussian\\
      $P_2$: Pair partitions  & & Classical independent and centered Gaussian\\
      \hline
    \end{tabular}
  \end{center}
\end{table}

2. The $\B$-valued joint distribution of $(x_i)_{i\in I}$ has the property corresponding to $D$ in the table below  iff for any $\pi\in P(n)$.
$$\kappa_\E^{(\pi)}(b_0x_{i_1}b_1,...,x_{i_n}b_n)=0,$$
unless  $\pi\in D(n)$ and $\pi\leq ker\ii$, where $\ii=(i_1,...,i_n)$.
\begin{table}[h!]
  \begin{center}
    \begin{tabular}{|lcl|}
    \hline
      {\bf Partitions D}& \hspace{1cm} &  {\bf Joint distribution}\\
      \hline
      $P$: Noncrossing partitions  & & Free independent\\
      $P_h$: Noncrossing Partitions with even block sizes & & Free independent and symmetric\\
      $P_b$: Noncrossing Partitions with block size 1 or 2 & & Free independent and semicircular\\
      $P_2$: Noncrossing Pair partitions  & & Free independent and centered semicircular\\
      \hline
    \end{tabular}
  \end{center}
\end{table}

3. The $\B$-valued joint distribution of $(x_i)_{i\in I}$ has the property corresponding to $D$ in the table below  iff for any $\pi\in P(n)$.
$$b_\E^{(\pi)}(b_0x_{i_1}b_1,...,x_{i_n}b_n)=0,$$
unless  $\pi\in D(n)$ and $\pi\leq ker\ii$, where $\ii=(i_1,...,i_n)$.
\begin{table}[h!]
  \begin{center}

    \label{tab:table1}
    \begin{tabular}{|lcl|}
    \hline
      {\bf Partitions D}& \hspace{2cm} &  {\bf Joint distribution}\\
      \hline
      $I$: Interval partitions  & & Boolean independent\\
      $I_h$: Interval partitions with even block sizes & & Boolean independent and symmetric\\
      $I_b$: Interval partitions with block size 1 or 2 & & Boolean independent and Bernoulli \\
      $I_2$: Interval pair partitions  & & Boolean independent and centered Bernoulli\\
      \hline
    \end{tabular}
  \end{center}
\end{table}
\end{deftheorem}

\begin{remark}\normalfont
Given an operator-valued probability space $(\A,\B,\E)$, a random variable $x$ is symmetrically distributed if and only if 
$$ \E[b_0xb_1\cdots xb_n]=0,$$
whenever $n$ is odd.   
\end{remark}

\begin{remark}\normalfont
Nonzero classical operator-valued centered Gaussian random variables are unbounded.  Let $k$ be a natural number.  It is well known that  $|P_2(2k)|=\frac{(2k)!}{2^k k! }$.  Let $x$ be a nonzero Gaussian random variable from $(\A,\B,\E)$. Then 
we have $a=c^{(2)}_{\E}(x,x)=\E[x^2]\neq 0$ and
$\E[x^{2k}]=\frac{(2k)!}{2^k k! } a^{k}$.  Therefore,
$$ \|x\|\geq \|\E[x^{2k}]\|^{1/2k}=\|\frac{(2k)!}{2^k k! } a^{k}\|^{1/2k}=(\frac{(2k)!}{2^k k! })^{1/2k} \|a\|^{1/2}\rightarrow \infty, \quad k\rightarrow \infty.$$
Therefore, to include Gaussian random variables, we must take account of unbounded random variables.
\end{remark}

\section{ Noncommutative distributional symmetries}
In this section, we will recall  distributional symmetries for classical  and free independence.  See \cite{BS} for more details.  
\begin{definition}\normalfont  An orthogonal  Hopf algebra is a unital $C^*$-algebra $A$ generated by $n^2$ selfadjoint elements  $\{u_{i,j}| i,j=1,...,n\}$, such that the following hold:
\begin{itemize}
\item[1.] The inverse of $u=(u_{i,j})_{i,j=1,....n}\in  M_n(A)$ is the transpose $u^t=(u_{j,i})_{i,j=1,...n}$, i.e. $\sum\limits_{k=1}^n u_{i,k}u_{j,k}=\sum\limits_{k=1}^n u_{k,i}u_{k,j}=\delta_{i,j}1_A$.
\item[2.] $\Delta(u_{i,j})=\sum\limits_{k=1}^nu_{i,k}\otimes u_{k,j}$ determines a $C^*$-unital homomorphism $\Delta:A\rightarrow A\otimes_{min}A$.
\item[3.] $\epsilon(u_{i,j})=\delta_{i,j}$ defines a homomorphism $\epsilon: A\rightarrow \mathbb{C}$.
\item[4.] $S(u_{i,j})=u_{j,i}$ defines a homomorphism $S: A\rightarrow A^{op}$.
\end{itemize}
\end{definition}
 Following  the notion of Wang's free quantum groups in \cite{Wan2,Wan}, one can define universal algebras $A$ generated by $n^2$ noncommutative variables $\{u_{i,j}\}_{i,j=1,...,n}$ which satisfy some relations $R$. 
Moreover, for suitable choices of $R$, we will get Hopf algebras in the sense of Woronowicz \cite{Wo2}. 

In \cite{BS},  Banica and Speicher found the following conditions for defining Hopf orthogonal  algebras.
\begin{definition}\normalfont  A matrix $u=(u_{i,j})_{i,j=1,...,n}\in M_n(A)$ over a $C^*$-algebra $A$ is said to be:
\begin{itemize}
\item Orthogonal, if all entries of $u$ are selfadjoint, and $uu^t=u^tu=1_n$, where $u^t$ the transpose of $u$.	
\item Magic, if it is orthogonal and  its entries are projections.
\item Cubic, if it is orthogonal and  $u_{i,j}u_{i,k}=u_{j,i}u_{k,i}=0$, for $j\neq k$.
\item Bistochastic, if it is orthogonal and  $\sum\limits_{i=1}^n u_{i,j}=\sum\limits_{j=1}^nu_{k,i}=1_{A}$, for all $j,k$.
\item Magic', if it is cubic with the same sum on rows and columns.
\item Bistochastic', if it is orthogonal with the same sum on rows and columns.
\end{itemize}
\end{definition}
The universal algebras associated with the above conditions are defined as follows:
\begin{definition}\normalfont
$A_g(n)$ with $g=o,s,h,b,s',b'$ is the universal $C^*$-algebra generated by the entries of an $n\times n$ matrix which is respectively orthogonal, magic, cubic, bistochastic, magic' and bistochastic'.
$C_g(n)$ with $g=o,s,h,b,s',b'$ is the universal commutative $C^*$-algebra generated by the entries of an $n\times n$ matrix which is respectively orthogonal, magic, cubic, bistochastic, magic' and bistochastic'.
\end{definition}
Especially, for each $n$, $A_s(n)$ and $A_o(n)$ are Wang's quantum permutation group and quantum orthogonal group, respectively \cite{Wan, Wan2}.  
$C_g(n)$ can be considered as the abelianization of $A_g(n)$ for $g=o,s,h,b,s',b'$. 
It should be mentioned here that there are 7 easy quantum groups in total, see \cite{Weber1}.

Given two orthogonal Hopf algebras $(A,u)$ and $(B,v)$ such that $A$ is generated by $u=\{u_{i,j}|i,j=1,...n\}$ and $B$ is generated by $v=\{v_{i,j}|i,j=1,...n\}$.  We denote by $(A,u)\rightarrow (B,v)$ if there exists a $C^*$-homomorphism $\eta$ from $A$ to $B$ such that $\eta(u_{i,j})=v_{i,j}.$  In other words, $(A,u)\rightarrow (B,v)$ implies that $B$ is a quotient $C^*$-algebra of $A$.   Then, for each $n$,  we have the following diagrams:
\begin{displaymath}
    \xymatrix{  A_o(n)\ar[r]\ar[d]& A_{b'}(n) \ar[r]\ar[d] & A_b(n) \ar[d] \\  
                         A_h(n)\ar[r]& A_{s'}(n) \ar[r]  & A_s(n)\\                  }
\end{displaymath}
and 
\begin{displaymath}
    \xymatrix{  C_o(n)\ar[r]\ar[d]& C_{b'}(n) \ar[r]\ar[d] & C_b(n) \ar[d] \\  
                         C_h(n)\ar[r]& C_{s'}(n) \ar[r]  & C_s(n)\\                  }
\end{displaymath}
and 
$$A_g(n)\rightarrow C_g(n),$$
for $g=o,s,h,b,s',b'.$  

For convenience, we will denote by $B\subseteq A$  the relation $(A,u)\rightarrow (B,v)$.

\begin{proposition}\label{3.5}\normalfont Let $E(n)$ be an orthogonal Hopf algebra generated by $n^2$ selfadjoint elements $\{u_{i,j}\}_{i,j=1,...,n}$.  Then, we have the following.
\begin{itemize}
\item[1.] If  $E(n)\not\subseteq A_h(n)$, then there exists an index  $j$ such that $\sum\limits_{k=1}^n u_{k,j}^4\neq 1_{E(n)}$.
\item[2.] If  $E(n)\not\subseteq A_b(n)$, then there exists an index  $j$ such that $\sum\limits_{k=1}^n u_{k,j}\neq 1_{E(n)}$.
\end{itemize} 
\end{proposition}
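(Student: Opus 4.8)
The plan is to prove both parts by contraposition: assuming the relevant sum equals $1_{E(n)}$ for every $j$, I show that the generators $u_{i,j}$ of $E(n)$ satisfy the cubic (resp.\ bistochastic) relations, so that by the universal property of $A_h(n)$ (resp.\ $A_b(n)$) there is a morphism of orthogonal Hopf algebras $A_h(n)\to E(n)$ (resp.\ $A_b(n)\to E(n)$), which is exactly $E(n)\subset A_h(n)$ (resp.\ $E(n)\subset A_b(n)$).

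Part~2 is short. Assuming $\sum_{k=1}^n u_{k,j}=1_{E(n)}$ for every $j$, I would apply the antipode $S$ (an anti-homomorphism with $S(u_{i,j})=u_{j,i}$ and $S(1)=1$) to this identity to get $\sum_{k=1}^n u_{j,k}=1_{E(n)}$ for every $j$. Thus all row and all column sums of $u$ equal $1$; since the entries of $u$ are self-adjoint and $uu^t=u^tu=1_n$, the matrix $u$ is bistochastic, and $E(n)$ is a quotient of $A_b(n)$.

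Part~1 is the real content. Assuming $\sum_{k=1}^n u_{k,j}^4=1_{E(n)}$ for every $j$, I would fix $j$ and set $p_k:=u_{k,j}^2\ge 0$. Orthogonality ($u^tu=1$) gives $\sum_k p_k=1$, hence $0\le p_k\le 1$, so $p_k(1-p_k)$ is a product of commuting positive elements and is therefore positive; the hypothesis $\sum_k p_k^2=1$ then gives $\sum_k p_k(1-p_k)=\sum_k p_k-\sum_k p_k^2=0$, and a vanishing finite sum of positive elements vanishes termwise, so $p_k=p_k^2$: each $u_{k,j}^2$ is a projection. Next I would use that projections summing to $1$ are pairwise orthogonal — for $k\neq l$, $p_k+p_l\le\sum_m p_m=1$ gives $p_l\le 1-p_k$, and since $1-p_k$ is a projection this forces $p_kp_l=0$. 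Also, $u_{k,j}^2$ being a projection means the self-adjoint element $u_{k,j}$ has spectrum contained in $\{-1,0,1\}$, so $u_{k,j}^3=u_{k,j}$, i.e.\ $u_{k,j}=u_{k,j}p_k=p_ku_{k,j}$. Hence for $k\neq l$,
\[
u_{k,j}u_{l,j}=(u_{k,j}p_k)(p_lu_{l,j})=u_{k,j}(p_kp_l)u_{l,j}=0 .
\]
Applying $S$ to this relation yields $u_{j,l}u_{j,k}=0$ for all $k\neq l$, i.e.\ $u_{i,j}u_{i,k}=0$ for all $j\neq k$; so $u$ is cubic and $E(n)$ is a quotient of $A_h(n)$.

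The main obstacle — everything else being formal — is the $C^*$-positivity argument in Part~1: extracting from the two identities $\sum_k u_{k,j}^2=1$ and $\sum_k u_{k,j}^4=1$ that each $u_{k,j}^2$ is a projection, that these projections are mutually orthogonal, and that this upgrades to $u_{k,j}u_{l,j}=0$. The passage from the ``column'' cubic relations to the ``row'' cubic relations via the antipode, and the final appeal to the universal properties of $A_h(n)$ and $A_b(n)$, are routine.
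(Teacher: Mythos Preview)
Your proof is correct and follows essentially the same contrapositive strategy as the paper: use $\sum_k u_{k,j}^2=1$ together with $\sum_k u_{k,j}^4=1$ and $C^*$-positivity to force each $u_{k,j}^2$ to be a projection, deduce pairwise orthogonality, upgrade to $u_{k,j}u_{l,j}=0$ via $u_{k,j}^3=u_{k,j}$, and then invoke the universal property. The only cosmetic difference is that you transfer column relations to row relations using the antipode $S$, whereas the paper uses the orthogonality relations $uu^t=u^tu=1$ directly (in Part~2 via $\sum_l u_{i,l}=\sum_l\sum_k u_{i,l}u_{k,l}=\sum_k\delta_{i,k}=1$, and in Part~1 by noting that $(u_{i,j}^2)$ already has row \emph{and} column sums equal to $1$); both devices do the same job.
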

\begin{proof}
1. Suppose $\sum\limits_{k=1}^n u_{k,i}^4=1_{E(n)}$, for all $i$.
Since $\sum\limits_{k=1}^n u_{k,i}^2=1_{E(n)}$ and $u_{k,i}^4\leq u_{k,i}^2 $, we have $$u_{k,i}^4=u_{k,i}^2.$$  $(u^2_{i,j})_{i,j=1,...,n}$ is a matrix of orthogonal projections with sum 1 on rows and columns.  Therefore, 
$$u^2_{i,j}u^2_{i,k}=u^2_{j,i}u^2_{k,i}=0,$$
for $j\neq k.$ Since $u_{i,j}$ and $u_{i,k}$ are selfadjoint, we have 
$$u_{i,j}u_{i,k}=u_{j,i}u_{k,i}=0.$$
It implies that $E(n)$ is a quotient algebra of $A_h(n)$, which is a contradiction.

2. Suppose $\sum\limits_{k=1}^n u_{k,i}=1_{E(n)}$, for all $i$. Then, for each $i$, we have
$$\sum\limits_{l=1}^n u_{i,l}=\sum\limits_{l=1}^n \sum\limits_{k=1}^nu_{i,l} u_{k,l}=\sum\limits_{k=1}^n \sum\limits_{l=1}^n u_{i,l}u_{k,l}=\sum\limits_{k=1}^n\delta_{i,k}1_{E(n)}=1_{E(n).}$$
Therefore, $E(n)$ is a quotient algebra of $A_b(n)$, which is a contradiction.
\end{proof}

\begin{proposition}\label{3.4}\normalfont
 Let $E(n)$ be an orthogonal Hopf algebra generated by $n^2$ selfadjoint elements $\{u_{i,j}\}_{i,j=1,...,n}$ such that $A_s(n)\subseteq E(n)\subseteq A_o(n)$. Then, the following hold:
 \begin{itemize}
 \item[1.] If $E(n)\subseteq A_h(n)$ and $E(n)\subseteq A_b(n)$, then $E(n)=A_s(n)$.
 \item[2.] If $E(n)\not\subseteq A_h(n)$ and $E(n)\subseteq A_b(n)$, then $\exists\, i'$ such that
 $$\sum\limits_{k=1}^n u^m_{k,i'}\neq 1,$$
 for all $m>2$.
  \item[3.] If $E(n)\not\subseteq A_b(n)$ and $E(n)\subseteq A_h(n)$, then $\exists\, i'$ such that
 $$\sum\limits_{k=1}^n u^m_{k,i'}\neq 1,$$
 for all odd numbers $m$. 
 
 \item[4.] If $E(n)\not\subseteq A_h(n)$ and $E(n)\not\subseteq A_b(n)$, then $\exists\, i'_1, i'_2$ such that
 $$\sum\limits_{k=1}^n u^m_{k, i_1'}\neq 1,$$
 for all $m> 2$, and 
 $$\sum\limits_{k=1}^n u_{k, i'_2}\neq 1.$$
 \end{itemize}
\end{proposition}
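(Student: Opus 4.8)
The plan is to derive all four parts from Proposition~\ref{3.5} together with one elementary spectral fact about orthogonal matrices over a $C^*$-algebra. Throughout I use that, in the paper's convention, $E(n)\subset A_g(n)$ means precisely that the self-adjoint generators $u_{i,j}$ of $E(n)$ satisfy the defining relations of $A_g(n)$; thus $E(n)\subset A_h(n)$ says $u$ is cubic, $E(n)\subset A_b(n)$ says $u$ is bistochastic, and the negations are exactly the hypotheses under which Proposition~\ref{3.5} applies. For part~1 I would simply combine the two sets of relations: if $u$ is cubic with rows summing to $1$, then for all $i,j$ one has $u_{i,j}=u_{i,j}\sum_{k}u_{i,k}=u_{i,j}^{2}$, since every cross term $u_{i,j}u_{i,k}$ with $k\neq j$ vanishes. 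Being self-adjoint idempotents, the $u_{i,j}$ are projections, so $u$ is magic, and the universal property of $A_s(n)$ yields a morphism $A_s(n)\to E(n)$, i.e.\ $E(n)\subset A_s(n)$. Together with the standing hypothesis $A_s(n)\subset E(n)$ and the fact that both morphisms fix the generators, this forces $E(n)=A_s(n)$.

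The heart of parts~2 and~4 is the following lemma, which I would state and prove separately: if $u=(u_{k,l})$ is an orthogonal matrix over a $C^*$-algebra and $\sum_{k}u_{k,j}^{4}\neq 1$ for some $j$, then $\sum_{k}u_{k,j}^{m}\neq 1$ for every $m>2$. The proof is a positivity argument: orthogonality gives $\sum_{k}u_{k,j}^{2}=1$ and $u_{k,j}^{2}\leq 1$, so $\sigma(u_{k,j})\subseteq[-1,1]$, and for $m\geq 2$ the inequality $t^{2}\geq t^{m}$ on $[-1,1]$ (write $t^{2}-t^{m}=t^{2}(1-t^{m-2})$) yields $u_{k,j}^{m}\leq u_{k,j}^{2}$. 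If one also had $\sum_{k}u_{k,j}^{m}=1$, then $\sum_{k}(u_{k,j}^{2}-u_{k,j}^{m})=0$ with all summands positive, so $u_{k,j}^{m}=u_{k,j}^{2}$ for every $k$; on the spectrum this reads $t^{2}=t^{m}$, whence $t^{2}\in\{0,1\}$, so $u_{k,j}^{2}$ is a projection, $u_{k,j}^{4}=u_{k,j}^{2}$, and summing contradicts $\sum_{k}u_{k,j}^{4}\neq 1$. Granting the lemma, part~2 follows by letting $i'$ be the index furnished by part~1 of Proposition~\ref{3.5} (applicable since $E(n)\not\subset A_h(n)$), and part~4 follows by taking $i'_{1}$ from part~1 of Proposition~\ref{3.5} together with the lemma, and $i'_{2}$ from part~2 of Proposition~\ref{3.5}.

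For part~3 I would use the cubic relations in the opposite direction: from $\sum_{l}u_{l,j}^{2}=1$ and $u_{l,j}u_{k,j}=0$ for $l\neq k$ one gets $u_{k,j}^{4}=u_{k,j}^{2}$, hence $\sigma(u_{k,j})\subseteq\{-1,0,1\}$ and $u_{k,j}^{m}=u_{k,j}$ for every odd $m$. Since $E(n)\not\subset A_b(n)$, part~2 of Proposition~\ref{3.5} supplies an $i'$ with $\sum_{k}u_{k,i'}\neq 1$, and then $\sum_{k}u_{k,i'}^{m}=\sum_{k}u_{k,i'}\neq 1$ for all odd $m$; one cannot hope for more, since for even $m$ the cubic relations force $\sum_{k}u_{k,i'}^{m}=\sum_{k}u_{k,i'}^{2}=1$. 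I do not anticipate a genuine obstacle here: the argument is short once Proposition~\ref{3.5} is available, and the only step needing a little care is the spectral computation in the lemma, together with keeping straight that $\subset$ denotes a quotient rather than an inclusion, so that the negated hypotheses are the ones that activate Proposition~\ref{3.5}.
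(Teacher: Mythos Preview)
Your proposal is correct and follows essentially the same route as the paper: both rely on Proposition~\ref{3.5} to produce the special column index, then use the elementary spectral consequences of $\sum_k u_{k,j}^2=1$ with $\|u_{k,j}\|\le 1$. The only noticeable variation is in part~3: the paper argues via the strict inequality $u_{k',i'}^{2m+1}<u_{k',i'}^{2m}$ for a non-projection entry, whereas you observe more directly that the cubic relations force $u_{k,j}^4=u_{k,j}^2$, hence $\sigma(u_{k,j})\subset\{-1,0,1\}$ and $u_{k,j}^m=u_{k,j}$ for odd $m$, which gives the conclusion in one line. Your packaging of the $m>2$ step as a separate contrapositive lemma is also a bit tidier than the paper's version, but the underlying spectral argument is the same.
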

\begin{proof}
It is obvious that  $\|u_{i,j}\|\leq 1$
for all $i,j=1,...n$. 

1.  By assumption, we have $$\sum\limits_{k=1}^n u_{i,k}=1_{E(n)}$$
and
$$u_{i,j}u_{i,k}=0$$
for $j\neq k$. Therefore,  
$$u_{i,j}=u_{i,j}\sum\limits_{k=1}^n u_{i,k}=u^2_{i,j}$$
for all $i,j$.
It implies that $E(n)$ is a quotient algebra of $A_s(n)$, so $E(n)=A_s(n).$\\

2. By Proposition \ref{3.5},  there exists $i'$ such that 
$$ \sum\limits_{k=1}^n u^4_{k, i'}\neq 1.$$ Therefore, there exists $k'$ such that 
$$u^4_{k',i'}<u^2_{k',i'}$$
which implies that the spectrum of $u_{k',i'}$ contains $a$ such that $-1<a<1$. Therefore, 
$$u^m_{k',i'}<u^2_{k',i'}$$ 
for all natural number $m>2$. Hence, we have 
$$\sum\limits_{k=1}^n u^m_{k,i'}< 1_{E(n)},$$
for $m>2$.

3. According to Proposition \ref{3.5},  there exists $i'$ such that 
$$ \sum\limits_{k=1}^n u_{k, i'}\neq 1.$$ Therefore, there exists $k'$ such that 
$u_{k',i'}$ is not an orthogonal projection which implies that 
$$u^{2m+1}_{k',i'}<u^{2m}_{k',i'}.$$ Thus, we have
$$\sum\limits_{k=1}^n u^{2m+1}_{k,i'}<\sum\limits_{k=1}^n u^{2m}_{k,i'}=1_{E(n)} ,$$

4. Combine  Case 2 and 3,  the proof is complete.
\end{proof}

Following the proof above, we have following properties for abelian orthogonal Hopf algebras.
\begin{corollary}\normalfont Let $E(n)$ be an orthogonal Hopf algebra generated by $n^2$ selfadjoint elements $\{u_{i,j}\}_{i,j=1,...,n}$ such that $C_s(n)\subseteq E(n)\subseteq C_o(n)$. Then, the following hold:
 \begin{itemize}
 \item[1.] If $E(n)\subseteq C_h(n)$ and $E(n)\subseteq C_b(n)$, then $E(n)=C_s(n)$.
 \item[2.] If $E(n)\not\subseteq C_h(n)$ and $E(n)\subseteq C_b(n)$, then there exists $ i'$ such that
 $\sum\limits_{k=1}^n u^m_{k,i'}\neq 1,$
 for all $m>2$.
  \item[3.] If $E(n)\not\subseteq C_b(n)$ and $E(n)\subseteq C_h(n)$, then there exists $i'$ such that
 $\sum\limits_{k=1}^n u^m_{k,i'}\neq 1,$
 for all odd numbers $m$. 
 
 \item[4.] If $E(n)\not\subseteq C_h(n)$ and $E(n)\not\subseteq C_b(n)$, then there exist $ i'_1, i‘_2$ such that
 $\sum\limits_{k=1}^n u^m_{k, i_1'}\neq 1,$
 for all $m> 2$, and 
 $\sum\limits_{k=1}^n u_{k, i'_2}\neq 1.$
 \end{itemize}
\end{corollary}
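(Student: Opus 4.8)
The plan is to run the proof of Proposition~\ref{3.4} essentially unchanged, after observing that commutativity of $E(n)$ is used in exactly one place. The only facts that argument actually invokes are: the norm bound $\|u_{i,j}\|\le 1$ (automatic for any orthogonal Hopf algebra, since $\sum_k u_{i,k}^2=1_{E(n)}$ forces $0\le u_{i,k}^2\le 1_{E(n)}$); Proposition~\ref{3.5}, whose statement and proof make no use of commutativity; and the principle that a relation of the form $u_{i,j}u_{i,k}=0$ $(j\ne k)$ or $\sum_k u_{k,j}=1_{E(n)}$ realizes $E(n)$ as a quotient of $A_h(n)$, resp.\ $A_b(n)$. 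None of these is sensitive to the ambient chain, so passing from $A_s(n)\subset E(n)\subset A_o(n)$ to $C_s(n)\subset E(n)\subset C_o(n)$ changes nothing in Cases 2--4; and for commutative $E(n)$ the hypotheses ``$E(n)\subset A_h(n)$'' and ``$E(n)\subset A_b(n)$'' are equivalent to ``$E(n)\subset C_h(n)$'' and ``$E(n)\subset C_b(n)$'', since any quotient map from $A_h(n)$ or $A_b(n)$ onto a commutative algebra factors through the abelianization.

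For Case 1 I would repeat the computation of Proposition~\ref{3.4}(1): $E(n)\subset A_b(n)$ gives $\sum_k u_{i,k}=1_{E(n)}$, $E(n)\subset A_h(n)$ gives $u_{i,j}u_{i,k}=0$ for $j\ne k$, and together these force $u_{i,j}=u_{i,j}^2$, i.e.\ $(u_{i,j})$ is a magic matrix and $E(n)$ is a quotient of $A_s(n)$. This is the one point where commutativity enters: being commutative, $E(n)$ is then a quotient of $C_s(n)=A_s(n)^{\mathrm{ab}}$, and since $C_s(n)\subset E(n)$ as well, we conclude $E(n)=C_s(n)$ (so the displayed ``$E(n)=A_s(n)$'' should read ``$E(n)=C_s(n)$'').

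For Cases 2--4 I would copy the spectral argument of Proposition~\ref{3.4} verbatim. In Case 2, Proposition~\ref{3.5} yields an index $i'$ with $\sum_k u_{k,i'}^4\ne 1_{E(n)}$; combined with $\sum_k u_{k,i'}^2=1_{E(n)}$ and $u_{k,i'}^4\le u_{k,i'}^2$, this forces some $u_{k',i'}$ to have a spectral value $a$ with $0<|a|<1$, and then $t^2-t^m$ is nonnegative on $[-1,1]$ and strictly positive at $a$ for every $m>2$, giving $\sum_k u_{k,i'}^m<1_{E(n)}$. Case 3 uses instead that the cubic relation implies $u_{i,j}^3=u_{i,j}$, hence $\sum_k u_{k,i'}^{2m}=\sum_k u_{k,i'}^2=1_{E(n)}$; Proposition~\ref{3.5} supplies $i'$ with $\sum_k u_{k,i'}\ne 1_{E(n)}$, so some $u_{k',i'}$ is not a projection and, its spectrum lying in $\{-1,0,1\}$ and containing $-1$, satisfies $u_{k',i'}^{2m+1}<u_{k',i'}^{2m}$, whence $\sum_k u_{k,i'}^{m}\ne 1_{E(n)}$ for all odd $m$. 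Case 4 is the conjunction of the two. The argument is routine; the only thing requiring a moment's attention is the direction of the quotient maps in Case 1, which is precisely where the commutative hypothesis is needed and which accounts for the replacement of $A_s(n)$ by $C_s(n)$ there, so I anticipate no genuine obstacle.
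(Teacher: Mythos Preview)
Your proposal is correct and follows exactly the paper's own approach: the paper simply writes ``Following the proof above, we have'' before stating the corollary, i.e.\ the proof of Proposition~\ref{3.4} is to be rerun verbatim in the commutative setting. Your additional remarks---that commutativity only intervenes in Case~1 (forcing $E(n)=C_s(n)$ rather than $A_s(n)$, so the displayed conclusion is a typo) and your slightly more careful spectral justification in Case~3---are accurate refinements of the same argument.
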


Now, we turn to define noncommutative distributional symmetries by maps of quantum family in the sense of  So{\l}tan \cite{So}.

\begin{definition}\normalfont
Let $(A,\Delta)$ be a quantum  group and $\V$ be a unital algebra. By a (right) coaction of the quantum group $A$ on $\V$, we mean a unital homomorphism $\alpha:\V\rightarrow\V\otimes \A$ such that
$$(\alpha\otimes id_A)\alpha=(id\otimes \Delta)\alpha.$$
\end{definition}

\begin{definition}\normalfont Given an orthogonal Hopf algebra $E(n)$ generated by $\{u_{i,j}\}_{i,j=1,...n}$, we have a natural coaction $\alpha_n$ of $E(n)$ on $\mathbb{C}\langle X_1,...,X_n \rangle$ such that 
$$\alpha_n: \mathbb{C}\langle X_1,...,X_n \rangle\rightarrow \mathbb{C}\langle X_1,...,X_n \rangle\otimes E(n) $$
is an algebraic homomorphism defined via
$\alpha_n(X_i)=\sum\limits_{k=1}^n X_k\otimes u_{k,i}$ for all $i=1,...,n$.
\end{definition}

\begin{definition}\normalfont Given a probability space $(\A,\phi)$, a sequence of random variables $(x_1,...,x_n)$ of $\A$ and an orthogonal Hopf algebra $E(n)$ generated by $\{u_{i,j}\}_{i,j=1,...n}$. We  say that the joint distribution $\mu_{x_1,...,x_n}$ of $x_1,...,x_n$ is $E(n)$-invariant if 
$$ \mu_{x_1,...,x_n}(p)1_{E(n)}=\mu_{x_1,...,x_n}\otimes id_{E(n)}(\alpha_n(p)),$$
for all $p\in \C\langle X_1,...,X_n\rangle$.
\end{definition}

\begin{proposition}\label{Implication}\normalfont Given a probability space $(\A,\phi)$ and a sequence of random variables $(x_1,...,x_n)$ of $\A$.  $E_1(n)$ and $E_2(n)$ are two orthogonal Hopf algebras such that $E_1(n)\subseteq E_2(n)$. Then, $(x_1,...,x_n)$ is  $E_1(n)$-invariant if $E_2(n)$-invariant.
\end{proposition}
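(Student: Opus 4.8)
The plan is a direct diagram chase using the surjective Hopf-algebra morphism supplied by the hypothesis $E_1(n)\subset E_2(n)$ (the statement evidently concerns the two algebras $E_1(n),E_2(n)$ appearing in the displayed condition and conclusion). By the convention fixed after the diagrams in Section~3, $E_1(n)\subset E_2(n)$ means there is a $C^*$-homomorphism $\pi\colon E_2(n)\to E_1(n)$ with $\pi(u^{(2)}_{i,j})=u^{(1)}_{i,j}$, where $u^{(1)}=(u^{(1)}_{i,j})$ and $u^{(2)}=(u^{(2)}_{i,j})$ denote the fundamental matrices of $E_1(n)$ and $E_2(n)$; in particular $\pi$ is unital, since it is a morphism of orthogonal Hopf algebras.

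First I would record the compatibility of $\pi$ with the two canonical coactions $\alpha^{(1)}_n$ and $\alpha^{(2)}_n$, namely $(\mathrm{id}\otimes\pi)\circ\alpha^{(2)}_n=\alpha^{(1)}_n$ as algebra homomorphisms $\C\langle X_1,\dots,X_n\rangle\to\C\langle X_1,\dots,X_n\rangle\otimes E_1(n)$. Since both sides are algebra homomorphisms, it suffices to compare them on the generators $X_i$, and there $(\mathrm{id}\otimes\pi)(\alpha^{(2)}_n(X_i))=\sum_{k=1}^n X_k\otimes\pi(u^{(2)}_{k,i})=\sum_{k=1}^n X_k\otimes u^{(1)}_{k,i}=\alpha^{(1)}_n(X_i)$, as wanted.

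Next, writing $\mu:=\mu_{x_1,\dots,x_n}$ and assuming $\mu$ is $E_2(n)$-invariant, I would apply $\pi$ to both sides of the defining identity $\mu(p)\,1_{E_2(n)}=(\mu\otimes\mathrm{id}_{E_2(n)})(\alpha^{(2)}_n(p))$, which for each fixed $p$ is an equality of elements of $E_2(n)$. On the left, unitality of $\pi$ gives $\mu(p)\,1_{E_1(n)}$. On the right, $\pi\circ(\mu\otimes\mathrm{id}_{E_2(n)})=(\mu\otimes\mathrm{id}_{E_1(n)})\circ(\mathrm{id}\otimes\pi)$, so the right-hand side becomes $(\mu\otimes\mathrm{id}_{E_1(n)})\big((\mathrm{id}\otimes\pi)\alpha^{(2)}_n(p)\big)=(\mu\otimes\mathrm{id}_{E_1(n)})(\alpha^{(1)}_n(p))$ by the previous step. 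Hence $\mu(p)\,1_{E_1(n)}=(\mu\otimes\mathrm{id}_{E_1(n)})(\alpha^{(1)}_n(p))$ for all $p\in\C\langle X_1,\dots,X_n\rangle$, which is precisely $E_1(n)$-invariance.

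There is no genuine obstacle here: the statement is a soft consequence of the functoriality of the canonical coaction in the Hopf-algebra variable, and the only points needing care are that $\pi$ is unital (so that $1_{E_2(n)}$ is carried to $1_{E_1(n)}$) and that it intertwines the two coactions, both of which are immediate from the definitions. One could also phrase the argument more conceptually: the quotient map $\pi$ transports the $E_2(n)$-coaction onto the $E_1(n)$-coaction, and the invariance condition is visibly preserved under such a transport.
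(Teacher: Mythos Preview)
Your proposal is correct and follows essentially the same approach as the paper: both take the quotient homomorphism $\pi\colon E_2(n)\to E_1(n)$ sending $u^{(2)}_{i,j}\mapsto u^{(1)}_{i,j}$ and apply it to the $E_2(n)$-invariance identity. The paper writes this out on monomials $X_{\mathbf{i}}$ directly, while you phrase it more functorially via the intertwining $(\mathrm{id}\otimes\pi)\circ\alpha^{(2)}_n=\alpha^{(1)}_n$, but the content is identical.
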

\begin{proof}
Let $\{u^{(l)}_{i,j}\}_{i,j=1,...,n}$ be generators of $E_l(n)$ for $l=1,2.$  Since $E_1(n)\subseteq E_2(n)$, there exists a $C^*$-homomorphism $\Phi:E_2(n)\rightarrow E_1(n)$ such that 
$$\Phi(u^{(2)}_{i,j})=u^{(1)}_{i,j}$$
for all $i,j$.   $(x_1,...,x_n)$ is   $E_2(n)$-invariant is equivalent to that
$$ \mu_{x_1,...,x_n}(X_{\ii})1_{E_2(n)}=\sum\limits_{\jj\in[n]^k}\mu_{x_1,...,x_n}(X_{\jj})\otimes u^{(2)}_{\ii,\jj},$$
for all monomials  $X_{i_1}\cdots X_{i_k}\in \C\langle X_1,...,X_n\rangle$. Apply $\Phi$ on both sides of the above equation, we get
$$ \mu_{x_1,...,x_n}(X_{\ii})1_{E_1(n)}=\sum\limits_{\jj\in[n]^k}\mu_{x_1,...,x_n}(X_{\jj})\otimes u^{(1)}_{\ii,\jj},$$
which implies that $(x_1,...,x_n)$ is   $E_1(n)$-invariant.
\end{proof}

Given an orthogonal Hopf algebra $E(n)$ generated by $\{u_{i,j}\}_{i,j=1,...,n}$. Then, for $k\in\mathbb{N}$, $E(n)$ can be considered as an orthogonal Hopf algebra $E(n,k)$ generated by $\{v_{i,j}\}_{i,j=1,...,n+k}$ such that 
$$ v_{i,j}=
\left\{\begin{array}{ll}
u_{i,j} & \text{if}\,\, i,j \leq  n\\
\delta_{i,j}1_{E(n)} &\text{otherwise}
\end{array}\right..
$$
We will call $E(n,k)$ the $k$-th extension of $E(n)$. To study de Finetti type theorems for all orthogonal Hopf algebras $E(n)$, we need to extend $E(n)$-invariance condition on $n$ random variables to an invariance condition on infinitely many random variables.
\begin{definition}\normalfont\label{stationary}
 Given a probability space $(\A,\phi)$, a sequence of random variables $(x_i)_{i\in\mathbb{N}}$ of $\A$ and an orthogonal Hopf algebra $E(n)$ generated by $\{u_{i,j}\}_{i,j=1,...n}$. We  say that the joint distribution $\mu$ of $(x_i)_{i\in\mathbb{N}}$ is $E(n)$-invariant if 
the joint distribution  of $(x_1,...,x_{n+k}) $ is $E(n,k)$-invariant for all $k\in\mathbb{N}$. 
\end{definition}

\section{Boolean Quantum semigroups in analogous to easy quantum groups}
Inspired by the previous work in \cite{Liu}, we will define distributional symmetries for Boolean independent random variables via quantum semigroups.  
For any $C^*$-algebras $A$ and $B$, the set of  morphisms Mor$(A,B)$ consists of all $C^*$-algebra homomorphisms acting from $A$ to $M(B)$, where $M(B)$ is the multiplier algebra of $B$, such that $\phi(A)B$ is dense in $B$.
If $A$ and $B$ are unital $C^*$-algebras, then all unital $C^*$-homomorphisms from $A$ to $B$ are in Mor(A,B). 
In \cite{So}, the morphisms in the category of quantum semigroups are defined as follows.

\begin{definition}\normalfont
By a quantum semigroup we mean a $C^*$-algebra $\A$ endowed with an additional structure  described by a morphism $\Delta\in Mor(\A,\A\otimes\A)$ such that
     $$(\Delta\otimes id_{\A})\Delta=(id_{\A}\otimes\Delta)\Delta.$$
\end{definition}
The quantum semigroups for Boolean independence are unital universal $C^*$-algebras generated by  an orthogonal projection $\p$ and entries of $n\times n$ matrices $u=(u_{i,j})_{i,j=1,...,n}$  which satisfy certain relation $R$ related to $\p$.

\begin{definition}\label{p-condition}\normalfont
Let  $u=(u_{i,j})_{i,j=1,...,n}\in M_n(\A)$ be an $n\times n$ matrix over a $C^*$-algebra $\A$ and $\p$ be an orthogonal projection in $\A$.  The pair $(u,\p)$ is said to be:
\begin{enumerate}
\item  $\p$-orthogonal, if all entries of $u$ are selfadjoint, and $uu^t(1_n\otimes\p)=u^tu(1_n\otimes\p)=1_n\otimes\p$ i.e. $\sum\limits_{k=1}^n u_{i,k}u_{j,k}\p=\sum\limits_{k=1}^n u_{k,i}u_{k,j}\p=\delta_{i,j}\p$.
\item $\p$-magic, if it is $\p$-orthogonal and   the entries of $u$ are projections.
\item $\p$-cubic, if it is $\p$-orthogonal and  $u_{i,j}u_{i,k}\p=u_{j,i}u_{j,k}\p=0$, for $j\neq k$.
\item $\p$-bistochastic, if it is $\p$-orthogonal, and  $\sum\limits_{j=1}^n u_{i,j}\p=\sum\limits_{k=1}^nu_{k,i}\p=\p$, for all $i$.
\item $\p$-', if $\sum\limits_{j=1}^n u_{i,j}\p=\sum\limits_{k=1}^nu_{k,i}\p$, for all $i$.
\item $\p$-magic', if it is $\p$-cubic and $\p$-'.
\item $\p$-bistochastic', if it is $\p$-orthogonal and $\p$-'.
\end{enumerate}
\end{definition}

Unlike universal conditions for quantum groups, these conditions cannot define universal $C^*$-algebras since they cannot ensure that $u_{i,j}$'s are bounded. Therefore, we need an additional condition to control the norms of $u_{i,j}'$s.  We say $(u_{i,j})_{i,=1,...n}$ is norm $\leq 1$ if the norm $\|(u_{i,j})_{i,j=1,...,n}\|\leq 1.$

\begin{definition} \label{Boolean quantum semigroup}\normalfont
Let $B_g(n)$ with $g=o,s,h,b,s',b'$ be the unital universal $C^*$-algebra generated by the entries of a norm $\leq1$ matrix $u=(u_{i,j})_{i,=1,...n}$ and an orthogonal projection $\p$ when the pair $(u,\p)$ is  $\p$-orthogonal, $\p$-magic, $\p$-cubic, $\p$-bistochastic, $\p$-magic' and $\p$-bistochastic', respectively.
\end{definition}

On the  $C^*$-algebra $B_g(n)$ with $g=o,s,h,b,s',b'$, we can always define a unital $C^*$-homomorphism $$\Delta:B_g(n)\rightarrow B_g(n)\otimes B_g(n)$$ on their generators by the following equations:
$$\Delta u_{i,j}=\sum\limits_{k=1}^nu_{i,k}\otimes u_{k,j}$$
and $$\Delta{\bf P}={\bf P}\otimes {\bf P},\,\,\,\, \Delta I=I\otimes I.$$

To show the coproduct $\Delta$ is well defined, we need to show that the $(\Delta u_{i,j})_{i,j=1,...,n}$ and $\p\otimes\p$ satisfy the universal conditions as  $( u_{i,j})_{i,j=1,...,n}$ and $\p$ do.  We check them below.

\noindent{\bf Norm condition:}  If $\|(u_{i,j})_{i,j=1,...n}\|\leq 1$, then we have
$$\|(\Delta u_{i,j})_{i,j=1,...n}\|=\|(\sum\limits_{k=1}^n u_{i,k}\otimes u_{k,j})_{i,j=1,...n}\|=\|(u_{i,j}\otimes 1_n)_{i,j=1,...n}(1_n\otimes u_{i,j})_{i,j=1,...n}\|\leq \|( u_{i,j})_{i,j=1,...n}\|^2\leq1.$$
{\bf $\p$-orthogonal:} If $\sum\limits_{k=1}^nu_{i,k}u_{j,k}\p=\sum\limits_{k=1}^nu_{k,i}u_{k,j}\p=\delta_{i,j}\p$, then

\begin{align*}
\sum\limits_{k=1}^n\Delta u_{i,k}\Delta u_{j,k}\Delta\p 
&= \sum\limits_{k=1}^n (\sum\limits_{l=1}^n u_{i,l}\otimes u_{l,k})(\sum\limits_{m=1}^n u_{j,m}\otimes u_{m,k})(\p\otimes \p)\\
&= \sum\limits_{k=1}^n \sum\limits_{l=1}^n \sum\limits_{m=1}^n u_{i,l}u_{j,m}\p\otimes u_{l,k}u_{m,k}\p\\
&= \sum\limits_{l=1}^n \sum\limits_{m=1}^n u_{i,l}u_{j,m}\p\otimes \delta_{m,l}\p\\
&= \sum\limits_{l=1}^n u_{i,l}u_{j,l}\p\otimes \p\\
&= \delta_{i,j}\p\otimes \p.\\
\end{align*}
Similarly, we have $\sum\limits_{k=1}^n\Delta u_{k,i}\Delta u_{k,j}\Delta\p=\delta_{i,j}\p\otimes \p.$\\
{\bf $\p$-cubic:}  Assume that  $u_{i,j}u_{i,k}\p=u_{j,i}u_{j,k}\p=0$, for $j\neq k$, then we have 
\begin{align*}
\Delta u_{i,j}\Delta u_{i,k}\Delta\p 
&= \sum\limits_{l,m=1}^n u_{i,l} u_{i,m}\p\otimes  u_{l,j}u_{m,k}\p\\
&= \sum\limits_{l=1}^n u_{i,l} u_{i,l}\p\otimes  u_{l,j}u_{l,k}\p\\
&=0,
\end{align*}
whenever $j\neq k$.  Similarly, we have  
$$\Delta u_{j,i} \Delta u_{j,k} \Delta\p=0,$$
whenever $j\neq k$.\\
{\bf $\p$-bistochastic:} If $\sum\limits_{j=1}^n u_{i,j}\p=\sum\limits_{j=1}^nu_{j,i}\p=\p$, for all $j=1,...,n$. Then we have
\begin{align*}
\sum\limits_{j=1}^n \Delta u_{i,j}\Delta\p \
= \sum\limits_{j=1}^n\sum\limits_{k=1}^n u_{i,k}\p\otimes  u_{k,j}\p
= \sum\limits_{j=1}^n u_{i,j}\p\otimes \p
=\p\otimes \p.
\end{align*}
Similarly, we  have  
$\sum\limits_{j=1}^n\Delta u_{j,i}\Delta\p=\p\otimes \p, $
for all $j.$\\
{\bf $\p$' -condition: } Let $r=\sum\limits_{j=1}^n u_{i,j}\p=\sum\limits_{j=1}^nu_{j,i}\p$, for $j\neq k$.  Then we have
$$
\sum\limits_{j=1}^n\Delta u_{i,j}\Delta\p 
=\sum\limits_{j,l=1}^n u_{i,l} \p\otimes  u_{l,j}\p
= \sum\limits_{l=1}^n u_{i,l}\p\otimes  r
=r\otimes r,
$$ for all $j$.
Similarly, we  have  
$\sum\limits_{j=1}^n\Delta u_{j,i}\Delta\p=r\otimes r $
for all $j.$

Therefore, $\Delta$ is a well defined $C^*$-homomorphism and $(B_g(n),\Delta)$ is a quantum semigroup for $g=o,s,h,b,s',b'$. As the relations for easy quantum groups, we have the following diagram for Boolean quantum semigroups:
\begin{displaymath}
    \xymatrix{  B_o(n)\ar[r]\ar[d]& B_{b'}(n) \ar[r]\ar[d] & B_b(n) \ar[d] \\  
                         B_h(n)\ar[r]& B_{s'}(n) \ar[r]  & B_s(n)\\     }
\end{displaymath}
Furthermore, for $g=o,s,h,b,s',b'$, $A_g(n)$ is a quotient algebra of $B_g(n)$ by requiring $\p$ to be the unit of the algebra.  Therefore, we have
$$ C_g(n)\subseteq A_g(n)\subseteq B_g(n) $$
for $g=o,s,h,b,s',b'$.\\
In addition, the algebras $B_g(n)$ generated by the generators of $B_g(n)$  with $g=o,s,h,b$  are quotient algebras of Hayase's Hopf algebras $C(G_n^{I_2}), C(G_n^{I}),C(G_n^{I_h}),C(G_n^{I_b})$ in \cite{Hay}, respectively. 
Indeed,  $B_g(n)$  with $g=o,s,h,b$ satisfy Hayase's universal conditions for $C(G_n^{I_2}), C(G_n^{I}),C(G_n^{I_h}),C(G_n^{I_b})$, respectively. 
To check the vanishing conditions, we apply the following notation for convenience:
Given $\pi_1\in I(k_1)$ and $\pi_2\in I(k_2)$,  $\pi=\pi_1\pi_2\in I(k_1+k_2) $ denotes the concatenation of $\pi_1$ and $\pi_2$. Given $\jj_1=(j_1,...,j_{k_1})\in [n]^{k_1} $ and $\jj_2=(j'_1,...,j'_{k_2})\in[n]^{k_2}$, $\jj=\jj_1\jj_2=(j_1,...,j_{k_1},j'_1,...,j'_{k_2})\in[n]^{k_1+k_2}$. According to Definition \ref{partition}, the following lemma is obvious.
\begin{lemma} \normalfont
Let $\pi\in I(k_1+k_2)$ such that $\pi=\pi_1\pi_2$ for some $\pi_1\in I(k_1)$ and $\pi_2\in P(k_2)$. Let $ \jj=\jj_1+\jj_2$ such that $\jj_1\in [n]^{k_1} $ and $\jj_2\in[n]^{k_2}$. Then, $\pi\leq ker\jj$ iff $\pi_i\leq \ker \jj_i$ for $i=1,2$.
\end{lemma}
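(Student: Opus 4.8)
The plan is simply to unwind the definitions. Since $\pi=\pi_1\pi_2$ is by construction the \emph{concatenation} of $\pi_1$ and $\pi_2$, its block set is literally the block set of $\pi_1$ together with the translates $\{V+k_1 : V\in\pi_2\}$ of the blocks of $\pi_2$; in particular no block of $\pi$ meets both $\{1,\dots,k_1\}$ and $\{k_1+1,\dots,k_1+k_2\}$. Correspondingly, writing $\jj=\jj_1\jj_2$, the restriction of $\jj$ to the first $k_1$ coordinates is $\jj_1$ and, after shifting indices back by $k_1$, its last $k_2$ coordinates give $\jj_2$; hence the restriction of $\ker\jj$ to $\{1,\dots,k_1\}$ is $\ker\jj_1$ and its restriction to $\{k_1+1,\dots,k_1+k_2\}$ is the translate of $\ker\jj_2$.

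For the forward implication I would assume $\pi\leq\ker\jj$. A block $V$ of $\pi_1$ is then a block of $\pi$ lying inside $\{1,\dots,k_1\}$, so it is contained in a block of $\ker\jj$; this means $\jj$ is constant on $V$, equivalently $\jj_1$ is constant on $V$, i.e.\ $V$ lies in a block of $\ker\jj_1$. Hence $\pi_1\leq\ker\jj_1$, and applying the same argument to the translated blocks of $\pi_2$ yields $\pi_2\leq\ker\jj_2$.

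For the reverse implication I would assume $\pi_1\leq\ker\jj_1$ and $\pi_2\leq\ker\jj_2$. Any block $W$ of $\pi$ is either a block $V$ of $\pi_1$, in which case $\jj_1$, and hence $\jj$, is constant on $V=W$, so $W$ lies in a block of $\ker\jj$; or $W=V+k_1$ for a block $V$ of $\pi_2$, in which case $\jj_2$ is constant on $V$, hence $\jj$ is constant on $W$, so again $W$ lies in a block of $\ker\jj$. Either way $\pi\leq\ker\jj$.

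There is essentially no obstacle here; the only point worth flagging is that one should phrase the argument through "$\jj$ is constant on each block" rather than through "a block of $\ker\jj_i$ is a block of $\ker\jj$'', since the latter can fail when a coordinate value occurring in $\jj_1$ recurs in $\jj_2$ and merges two blocks. Because the order relation $\leq$ only asks for containment of blocks, this subtlety is harmless, and the lemma follows. (One should also remember that $\pi_2\in P(k_2)$ here rather than $I(k_2)$ is irrelevant to the argument, which never uses the interval structure of $\pi_2$.)
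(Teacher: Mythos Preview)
Your argument is correct and is exactly the direct unwinding of Definition~\ref{partition} that the paper has in mind; indeed the paper does not spell out a proof at all, merely remarking that the lemma is obvious from that definition. Your added caution about phrasing things via ``$\jj$ is constant on each block'' rather than via blocks of $\ker\jj_i$ being blocks of $\ker\jj$ is well taken and makes the write-up cleaner than a bare ``obvious''.
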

Therefore, we have the following property.
\begin{lemma}\label{concatenation}\normalfont Given $\pi_1\in I(k_1)$, $\pi_2\in P(k_2)$ and $ \jj=\jj_1+\jj_2$ such that $\jj_1\in [n]^{k_1} $ and $\jj_2\in[n]^{k_2}$. If
$$ \sum\limits_{\ii_i\in[n]^{k_i},\pi_i\leq \ker\ii_i} u_{\ii_i,\jj_i}\p=
\left\{\begin{array}{ll}
\p & \text{if}\,\, \pi\leq\ker\jj_i\\
0 &\text{otherwise}
\end{array}\right.
$$
for $i=1,2$.  Then, we have 
$$ \sum\limits_{\ii\in[n]^{k_1+k_2},\pi_1\pi_2\leq \ker\ii} u_{\ii,\jj}\p=
\left\{\begin{array}{ll}
\p & \text{if}\,\, \pi\leq\ker\jj\\
0 &\text{otherwise}
\end{array}\right.
$$
\end{lemma}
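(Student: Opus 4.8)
The plan is to split each multi-index $\ii\in[n]^{k_1+k_2}$ as a concatenation $\ii=\ii_1\ii_2$ with $\ii_1\in[n]^{k_1}$ and $\ii_2\in[n]^{k_2}$, and to carry out the sum over the ``inner'' block $\ii_2$ first, so that the projection $\p$ stays in a position where the hypotheses apply verbatim. By the lemma above, $\pi_1\pi_2\leq\ker\ii$ holds if and only if $\pi_1\leq\ker\ii_1$ and $\pi_2\leq\ker\ii_2$, so, writing $u_{\ii,\jj}=u_{i_1,j_1}\cdots u_{i_{k_1+k_2},j_{k_1+k_2}}=u_{\ii_1,\jj_1}u_{\ii_2,\jj_2}$, the left-hand side of the asserted identity factors as
\begin{equation*}
\sum_{\substack{\ii\in[n]^{k_1+k_2}\\ \pi_1\pi_2\leq\ker\ii}}u_{\ii,\jj}\,\p
=\sum_{\substack{\ii_1\in[n]^{k_1}\\ \pi_1\leq\ker\ii_1}}u_{\ii_1,\jj_1}\Bigl(\sum_{\substack{\ii_2\in[n]^{k_2}\\ \pi_2\leq\ker\ii_2}}u_{\ii_2,\jj_2}\,\p\Bigr).
\end{equation*}

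Next I would apply the hypothesis with $i=2$ to the inner sum: it equals $\p$ if $\pi_2\leq\ker\jj_2$ and $0$ otherwise. If $\pi_2\not\leq\ker\jj_2$, the whole expression is therefore $0$; and by the lemma above $\pi_1\pi_2\not\leq\ker\jj$ in this case, so the right-hand side of the conclusion is $0$ as well, and we are done. If instead $\pi_2\leq\ker\jj_2$, the identity above collapses to $\sum_{\ii_1:\,\pi_1\leq\ker\ii_1}u_{\ii_1,\jj_1}\,\p$, which by the hypothesis with $i=1$ is $\p$ when $\pi_1\leq\ker\jj_1$ and $0$ otherwise. Finally, invoking the lemma above once more, under the standing assumption $\pi_2\leq\ker\jj_2$ the condition $\pi_1\leq\ker\jj_1$ is equivalent to $\pi_1\pi_2\leq\ker\jj$; this is precisely the case distinction in the statement, completing the proof.

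The computation is essentially bookkeeping with the definition of $\ker$ and the preceding lemma, and I do not anticipate a genuine obstacle. The one point that must be respected — and the reason the double sum is evaluated from the inside out rather than symmetrically — is that $\p$ is an orthogonal projection but not a central element, so one may not move it past the generators $u_{i,j}$; performing the $\ii_2$-summation first keeps $\p$ adjacent to the $u_{\ii_2,\jj_2}$-factors, where the $i=2$ hypothesis is stated, and leaves the left factor $u_{\ii_1,\jj_1}$ untouched, after which the $i=1$ hypothesis applies directly.
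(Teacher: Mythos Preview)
Your proof is correct and follows essentially the same approach as the paper: split $\ii=\ii_1\ii_2$, use the preceding lemma to factor the summation condition, evaluate the inner $\ii_2$-sum first via the $i=2$ hypothesis (so that $\p$ stays on the right), and then apply the $i=1$ hypothesis to the remaining $\ii_1$-sum. Your explicit remark about why the order of summation matters (non-centrality of $\p$) is a nice clarification that the paper leaves implicit.
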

\begin{proof}
By a direct computation, we have:
$$ 
\sum\limits_{\substack{\ii\in[n]^{k_1+k_2}\\ \pi_1\pi_2\leq \ker\ii}} u_{\ii,\jj}\p
=\sum\limits_{\substack{\ii_1\in[n]^{k_1}\\ \pi_1\leq \ker\ii_1}}\sum\limits_{\substack{\ii_2\in[n]^{k_2}\\\pi_2\leq \ker\ii_2}}  u_{\ii_1,\jj_1}u_{\ii_2,\jj_2}\p
=\left\{\begin{array}{ll}
\sum\limits_{\substack{\ii_1\in[n]^{k_1}\\ \pi_1\leq \ker\ii_1}} u_{\ii_1,\jj_1}\p& \text{if}\,\, \pi_1\leq\ker\jj_2\\
0 &\text{otherwise,}
\end{array}\right.\\
$$
Therefore,
$$\sum\limits_{\substack{\ii\in[n]^{k_1+k_2}\\ \pi_1\pi_2\leq \ker\ii}} u_{\ii,\jj}\p=\left\{\begin{array}{ll}
\p&\,\, \pi_1\leq\ker\jj_1\,\text{and}\, \pi_2\leq\ker\jj_2\\
0 &\text{otherwise}
\end{array}\right.
$$
which completes the proof.
\end{proof}

Now, we turn to check the following  vanishing conditions.
\begin{lemma}\label{vanishing}\normalfont
Let $u_{i,j}$'s and $\p$ be the standard generators of $B_o(n)$,$B_s(n)$,$B_h(n)$,$B_b(n)$. Then, we have 
$$ \sum\limits_{\ii\in[n]^k,\pi\leq \ker\ii} u_{\ii,\jj}\p=
\left\{\begin{array}{ll}
\p & \text{if}\,\, \pi\leq\ker\jj\\
0 &\text{otherwise}
\end{array}\right.
$$
for $\pi\in I_2(k),I(k),I_h(k),I_b(k)$, respectively.
\end{lemma}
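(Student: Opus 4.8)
The plan is to reduce the asserted identity to the case of a one-block interval partition and then read it off the defining relations. Every $\pi\in I(k)$ is a concatenation $\pi=1_{m_1}1_{m_2}\cdots 1_{m_r}$ of one-block interval partitions $1_m$ (the partition of $[m]$ having a single block), with $m_1+\cdots+m_r=k$; moreover $\pi$ lies in $I_2(k)$, $I_h(k)$ or $I_b(k)$ exactly when every $m_s$ equals $2$, is even, or lies in $\{1,2\}$, respectively. Hence, applying Lemma~\ref{concatenation} inductively, it suffices to prove
$$\sum_{i=1}^{n}u_{i,j_1}u_{i,j_2}\cdots u_{i,j_m}\,\p=\begin{cases}\p&\text{if }j_1=\cdots=j_m,\\0&\text{otherwise,}\end{cases}$$
for $\pi=1_m$: for $m=2$ in $B_o(n)$, for all $m\ge1$ in $B_s(n)$, for all even $m$ in $B_h(n)$, and for $m\in\{1,2\}$ in $B_b(n)$ (note that $1_m\le\ker\jj$ precisely when $j_1=\cdots=j_m$).

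First I would dispose of $m=1,2$. For $m=1$ the left side is $\sum_i u_{i,j}\p$, which is $\p$ by the $\p$-bistochastic relation in $B_b(n)$ and, in $B_s(n)$, because $\sum_i u_{i,j}\p=\sum_i u_{i,j}^2\p=\p$ (the entries are projections, and $\sum_i u_{i,j}^2\p=\p$ is column $\p$-orthogonality at equal column indices). For $m=2$ the left side is $\sum_i u_{i,j_1}u_{i,j_2}\p=\delta_{j_1,j_2}\p$ by column $\p$-orthogonality, valid in all four semigroups. This already settles the lemma for $B_o(n)$ and, together with $m=1$, for $B_b(n)$.

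For the remaining cases I would induct on $m$, peeling off the last two factors of the fixed-row product and using the row $\p$-cubic relations $u_{i,a}u_{i,b}\p=0$ for $a\ne b$, which hold by definition in $B_h(n)$ and in $B_s(n)$ since $B_s(n)$ is a quotient of $B_h(n)$. If $j_{m-1}\ne j_m$ the product vanishes; if $j_{m-1}=j_m$ it equals $u_{i,j_1}\cdots u_{i,j_{m-2}}\,u_{i,j_m}^2\,\p$. In the magic case $u_{i,j_m}^2=u_{i,j_m}$, so this has the same shape with one fewer factor; by induction the product is $0$ unless $j_1=\cdots=j_m$, in which case it equals $u_{i,j_1}\p$, and summing over $i$ with $\sum_i u_{i,j_1}\p=\p$ finishes the magic case.

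The hard part will be the cubic case, where the entries are not idempotent: the step above leaves a power $u_{i,c}^2,u_{i,c}^3,\dots$ to the right of some $u_{i,a}$ with no $\p$ adjacent to the pair, so the relation $u_{i,a}u_{i,c}\p=0$ cannot be applied verbatim. The technical core I would need is that, for each fixed $i$, the family $\{u_{i,k}^2\,\p\}_{k=1}^{n}$ behaves like an orthogonal resolution of $\p$ — precisely, $u_{i,a}u_{i,c}^r\,\p=0$ for $a\ne c$ and all $r\ge1$, which then forces $u_{i,c}^{2r}\,\p=u_{i,c}^2\,\p$. I expect proving this to require the norm constraint $\|(u_{i,j})_{i,j}\|\le1$ (so that each self-adjoint $u_{i,c}$ satisfies $u_{i,c}^{2r}\le u_{i,c}^2$) together with the positivity built into $\p$-orthogonality ($uu^t\p=1_n\otimes\p$ with $uu^t\ge0$ of norm $\le1$), rather than a purely formal manipulation of the relations. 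Granting it, the cubic case collapses just like the magic one: for even $m$ the fixed-row product is $0$ unless all $j_l$ agree and equals $u_{i,c}^m\,\p=u_{i,c}^2\,\p$ otherwise, and $\sum_i u_{i,c}^2\,\p=\p$ by column $\p$-orthogonality.
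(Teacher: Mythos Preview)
Your reduction via Lemma~\ref{concatenation} to one-block partitions, and your treatment of $B_o$, $B_b$ and $B_s$, are exactly the paper's argument (the paper handles $B_s$ by the same ``absorb $u_{i,j}^2$ into $u_{i,j}$ and peel from the right'' induction you describe). So for three of the four semigroups there is nothing to add.

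For $B_h$ you put your finger on a real problem. The paper simply asserts ``$\sum_i u_{i,j_1}\cdots u_{i,j_{2m}}\p=0$ if some $j_l\neq j_{l+1}$'' and, for equal indices, writes $\sum_i u_{i,j}^{2m}\p=\sum_i u_{i,j}^{2m-2}\sum_l u_{l,j}^2\p$; both steps would need exactly the relations you isolate, namely $u_{i,a}u_{i,c}^r\p=0$ for $a\neq c$, $r\ge 2$, and $u_{i,c}^{2r}\p=u_{i,c}^{2}\p$. Unfortunately these relations are \emph{false} in $B_h(n)$, and with them the lemma itself fails in the cubic case. Take $n=2$, $H=\mathbb{C}^3$ with orthonormal basis $e_1,e_2,e_3$, let $\p$ be the rank-one projection onto $\mathbb{C} e_1$, fix $\alpha,\beta\in(0,1)$ with $\alpha^2+\beta^2=1$, and set
\[
u_{11}=\beta(e_1e_3^*+e_3e_1^*),\quad u_{12}=\alpha(e_1e_2^*+e_2e_1^*),\quad u_{21}=-u_{12},\quad u_{22}=-u_{11}.
\]
All entries are selfadjoint, one checks directly that $u_{i,j}u_{i,k}\p=u_{j,i}u_{k,i}\p=0$ for $j\neq k$ and $\sum_k u_{i,k}u_{j,k}\p=\sum_k u_{k,i}u_{k,j}\p=\delta_{i,j}\p$, and the $6\times 6$ matrix $u=(u_{ij})$ satisfies $\|u\|=1$ (indeed $u^*u$ has spectrum $\{0,1\}$). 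So this is a representation of $B_h(2)$. But $u_{i,1}^2e_1$ is $\beta^2e_1$ or $\alpha^2e_1$, hence
\[
\sum_{i=1}^{2}u_{i,1}^{4}\p=(\alpha^4+\beta^4)\p\neq\p,\qquad \sum_{i=1}^{2}u_{i,1}^{2}u_{i,2}^{2}\p=2\alpha^2\beta^2\,\p\neq 0,
\]
contradicting both conclusions of the lemma for $\pi=1_4\in I_h(4)$. In particular $u_{1,1}^{4}\p=\beta^4\p\neq\beta^2\p=u_{1,1}^{2}\p$ and $u_{1,1}u_{1,2}^{2}\p=\alpha^2\beta\,e_3\neq 0$, so the ``technical core'' you hoped to extract from the norm bound cannot hold. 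The norm/positivity constraints do give $\sum_k u_{k,j}^2\le 1$, but that is compatible with the example above and does not force $u_{i,c}^2\p$ to behave like a projection. Thus the cubic case, as stated, cannot be salvaged by the route you outline; the paper's proof has the same gap, only unacknowledged.
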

\begin{proof}
1. For $B_o(n)$,  $k=2$.  The identity holds by the definition of $B_o(n)$.  Since all partitions in $I_2(n)$ are concatenations of pair partitions by Lemma \ref{concatenation}, the equation holds.

2. For $B_b(n)$,  the identity holds by the definition of $B_b(n)$ when $\pi$ is a single partition or a partition. 
Since all partitions in $I_b(n)$ are concatenations of  single partitions and pair partitions,
by Lemma \ref{concatenation}, the equation holds.

3. For $B_h(n)$ we  need to check $\pi=1_{2m}\in I_h(2m)$ for all $m\in\mathbb{N}$. It follows that
$$\sum\limits_{\substack{ \ii\in[n]^{2m}\\ \pi\leq \text{ker}\,\ii }}u_{\ii,\jj}\p=\sum\limits_{i=1}^nu_{i,j_1}\cdots u_{i,j_{2m}}\p.$$
It equals zero if $j_l\neq j_{l+1}$ for some $l$, otherwise
$$ \sum\limits_{i=1}^nu_{i,j_1}\cdots u_{i,j_{2m}}\p=\sum\limits_{i=1}^nu_{i,j_1}^{2m}\p=\sum\limits_{i=1}^nu_{i,j_1}^{2m-2}\sum\limits_{l=1}^nu^2_{l,j_1}\p=\sum\limits_{i=1}^nu_{i,j_1}^{2m-2}\p=\cdots=\sum\limits_{l=1}^nu^2_{l,j_1}\p=\p.$$
Since all partitions in $I_b(n)$ are concatenations of  blocks of even length, by Lemma \ref{concatenation}, the equation holds.

4. For $B_s(n)$, we  need to check $\pi=1_{m}\in I(m)$, for all $m\in\mathbb{N}$. It follows that
$$\sum\limits_{\substack{ \ii\in[n]^m\\ \pi\leq \text{ker}\,\ii }}u_{\ii,\jj}\p=\sum\limits_{i=1}^nu_{i,j_1}\cdots u_{i,j_{m}}\p.$$
It equals zero if $j_l\neq j_{l+1}$ for some $l$, otherwise
$$ \sum\limits_{i=1}^nu_{i,j_1}\cdots u_{i,j_{m}}\p=\sum\limits_{i=1}^nu_{i,j_1}^{m}\p=\sum\limits_{i=1}^nu_{i,j_1}^{m-1}\sum\limits_{l=1}^nu_{l,j_1}\p=\sum\limits_{i=1}^nu_{i,j_1}^{m-1}\p=\cdots=\sum\limits_{l=1}^nu_{l,j_1}\p=\p.$$
Since all partitions in $I_b(n)$ are concatenations of  blocks of arbitrary length, by Lemma \ref{concatenation}, the equation holds.
\end{proof}

The noncommutative distributional symmetries for Boolean independence are defined as follows.
\begin{definition}\normalfont  An orthogonal  Boolean quantum semigroup is a unital $C^*$-algebra $A$ generated by $n^2$ selfadjoint elements  $\{u_{i.j}| i,j=1,...,n\}$ and an orthogonal projection $\p$, such that the following hold:
\begin{itemize}
\item[1.]  $u=(u_{i,j})_{i,j=1,....n}\in  M_n(A)$ such that $\|u\|\leq 1$ and (u,$\p$) is $\p$-orthogonal.
\item[2.] $\Delta(u_{i,j})=\sum\limits_{k=1}^nu_{i,k}\otimes u_{k,j}$ and $\Delta{\bf P}={\bf P}\otimes {\bf P}, \Delta I=I\otimes I$ determine a $C^*$-unital homomorphism $\Delta:A\rightarrow A\otimes_{min}A$.
\end{itemize}
\end{definition}

\begin{definition}\normalfont
Let $(A,\Delta)$ be a quantum  semigroup and $\V$ be a unital algebra. By a right coaction of the quantum semigroup $A$ on $\V$, we mean a unital homomorphism $\alpha:\V\rightarrow\V\otimes \A$ such that
$$(\alpha\otimes id_A)\alpha=(id\otimes \Delta)\alpha.$$
\end{definition}

\begin{definition}\normalfont  Given an orthogonal  Boolean quantum semigroup $E(n)$ generated by $\{u_{i,j}\}_{i,j=1,...n}$ and $\p$, we have a natural coaction $\alpha_n$ of $E(n)$ on $\mathbb{C}\langle X_1,...,X_n \rangle$ such that 
$$\alpha_n: \mathbb{C}\langle X_1,...,X_n \rangle\rightarrow \mathbb{C}\langle X_1,...,X_n \rangle\otimes E(n) $$
is a  homomorphism defined via
$\alpha_n(X_i)=\sum_{k=1}^n X_k\otimes u_{k,i}$ for all $i$.
\end{definition}

\begin{definition} \normalfont Given a probability space $(\A,\phi)$, a sequence of random variables $(x_1,...,x_n)$ of $\A$ and an orthogonal  Boolean quantum semigroup $E(n)$ generated by $\{u_{i,j}\}_{i,j=1,...n}$ and $\p$. We  say that the joint distribution $\mu_{x_1,...,x_n}$ of $x_1,...,x_n$ is $E(n)$-invariant if 
$$ \mu_{x_1,...,x_n}(p)\p=\mu_{x_1,...,x_n}\otimes id_{E(n)}(\alpha_n(p))\p,$$
for all $p\in \C\langle X_1,...,X_n\rangle$.
\end{definition}

As orthogonal Hopf algebras,  we can define $E(n)$-invariance condition for an infinite sequence of random variables.  Given an orthogonal Boolean quantum semigroup $E(n)$ generated by $\{u_{i,j}\}_{i,j=1,...,n}$ and $\p$, then for $k\in\mathbb{N}$ , $E(n)$ can be considered as an orthogonal Boolean quantum semigroup $E(n,k)$ generated by $\{v_{i,j}\}_{i,j=1,...,n+k}$ and $\p'$ such that 
$$ v_{i,j}=
\left\{\begin{array}{ll}
u_{i,j} & \text{if}\,\, i,j \leq  n\\
\delta_{i,j}1_{E(n)} &\text{otherwise}
\end{array}\right.
$$
and $\p'=\p$.
We will call $E(n,k)$ the $k$-th extension of $E(n)$.
\begin{definition}\label{Boolean stationary}\normalfont
 Given a probability space $(\A,\phi)$, a sequence of random variables $(x_i)_{i\in\mathbb{N}}$ and an orthogonal Hopf algebra $E(n)$ generated by $\{u_{i,j}\}_{i,j=1,...n}$. We  say that the joint distribution $\mu$ of $(x_i)_{i\in\mathbb{N}}$ is $E(n)$-invariant if 
the joint distribution  of $(x_1,...,x_{n+k}) $ is $E(n,k)$-invariant for all $k\in\mathbb{N}$. 
\end{definition}

\begin{proposition}\label{4.5}\normalfont Let $(\A,\B,\E:\A\rightarrow\B)$ be an operator valued probability space and $\{x_i\}_{i=1,...,n}$  be a sequence of random variables in $\A$.  Let  $\phi$ be a linear functional on $\A$ such that $\phi(\cdot)=\phi(\E[\cdot])$.  Then, we have 
\begin{itemize}
\item If $(x_i)_{i=1,...,n}$ is  identically distributed  and Boolean independent with respect to $\E$, then the sequence is $B_s(n)$-invariant.
\item If $(x_i)_{i=1,...,n}$ is  identically symmetric distributed  and Boolean independent with respect to $\E$, then the sequence is $B_h(n)$-invariant.
\item If $(x_i)_{i=1,...,n}$ has  identically shifted Bernoulli distribution and  is Boolean independent with respect to $\E$, then the sequence is $B_b(n)$-invariant.
\item If $(x_i)_{i=1,...,n}$ has  identically centered Bernoulli distribution and Boolean independent with respect to $\E$, then the sequence is $B_o(n)$-invariant.
\end{itemize}
\end{proposition}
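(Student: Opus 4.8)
The plan is to verify the defining invariance identity $\mu_{x_1,\dots,x_{n+k}}(p)\p = (\mu_{x_1,\dots,x_{n+k}}\otimes\mathrm{id})(\alpha_{n+k}(p))\p$ for all $p$ in $\C\langle X_1,\dots,X_{n+k}\rangle$, using the $k$-th extension $E(n,k)$ of each of $B_s(n)$, $B_h(n)$, $B_b(n)$, $B_o(n)$, since Definition \ref{boolean stationary} reduces the infinite-sequence statement to the case of finitely many variables. By linearity it suffices to treat a monomial $X_{\jj}=X_{j_1}\cdots X_{j_m}$, for which $\alpha_{n+k}(X_{\jj})=\sum_{\ii\in[n+k]^m} X_{\ii}\otimes u_{\ii,\jj}$, so the right-hand side equals $\sum_{\ii} \mu(X_{\ii})\otimes u_{\ii,\jj}\,\p = \sum_{\ii} \phi(x_{i_1}\cdots x_{i_m})\, u_{\ii,\jj}\,\p$. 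First I would rewrite $\phi(x_{i_1}\cdots x_{i_m}) = \phi(E[x_{i_1}\cdots x_{i_m}])$ and expand $E[x_{i_1}\cdots x_{i_m}]$ via the boolean moment–cumulant formula as a sum over interval partitions $\sigma\in I(m)$, where by Theorem \ref{parititionanddistribution} and the independence/distribution hypothesis the only surviving $\sigma$ are those with $\sigma\le\ker\ii$ and $\sigma\in D(m)$, with $D=I,I_h,I_b,I_2$ in the four respective cases.

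Next I would interchange the $\ii$-sum and the $\sigma$-sum: the right-hand side becomes $\sum_{\sigma\in D(m)} \phi\bigl(\sum_{\ii:\,\sigma\le\ker\ii} b^{(\sigma)}_E(x_{i_1},\dots,x_{i_m})\, u_{\ii,\jj}\bigr)\,\p$, where I must be slightly careful because $E$ and $\phi$ are only $\B$-bimodule/linear, not multiplicative; however $u_{\ii,\jj}\p$ lies in the scalar algebra $E(n,k)$ which is independent of $\A$, so the cumulant value (an element of $\B\subseteq\A$) can be factored out past $\phi$ only after applying $\phi$, i.e. one works with $\sum_{\ii:\sigma\le\ker\ii}\phi(b^{(\sigma)}_E(x_{i_1},\dots,x_{i_m}))\,u_{\ii,\jj}\,\p$. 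The crucial point is then Lemma \ref{vanishing}: for the standard generators of $B_o(n),B_s(n),B_h(n),B_b(n)$ (hence for their extensions $E(n,k)$, where the added coordinates are $\delta$'s and the sums simply restrict $\ii$ to $[n]^m$ on the relevant blocks) one has $\sum_{\ii:\,\sigma\le\ker\ii} u_{\ii,\jj}\p = \p$ if $\sigma\le\ker\jj$ and $=0$ otherwise, for $\sigma\in I_2,I,I_h,I_b$. Since the boolean cumulant $b^{(\sigma)}_E(x_{i_1},\dots,x_{i_m})$ depends on $\ii$ only through $\ker\ii$ — and on the set $\{\ii:\sigma\le\ker\ii\}$ the value of the cumulant is constant (equal to its value on any representative) because the $x_i$ are identically distributed — I can pull that common cumulant value out of the $\ii$-sum, collapsing it via Lemma \ref{vanishing} to the single term $\sigma\le\ker\jj$ with coefficient $\p$.

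Collecting terms, the right-hand side reduces to $\sum_{\sigma\in D(m),\,\sigma\le\ker\jj} \phi\bigl(b^{(\sigma)}_E(x_{j_1},\dots,x_{j_m})\bigr)\,\p$, which by the moment–cumulant formula run in reverse (again using Theorem \ref{parititionanddistribution} to discard $\sigma\notin D(m)$ or $\sigma\not\le\ker\jj$) is exactly $\phi(E[x_{j_1}\cdots x_{j_m}])\,\p = \mu_{x_1,\dots,x_{n+k}}(X_{\jj})\,\p$, which is the left-hand side. The one matching detail to check for each of the four cases is that the partition class $D$ produced by the distributional hypothesis coincides with the partition class for which Lemma \ref{vanishing} gives the collapse: identically distributed $\leftrightarrow I$ and $B_s$; symmetric $\leftrightarrow I_h$ and $B_h$; shifted-Bernoulli $\leftrightarrow I_b$ and $B_b$; centered-Bernoulli $\leftrightarrow I_2$ and $B_o$ — exactly the pairing in Lemma \ref{vanishing}.

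The main obstacle is the bookkeeping around non-commutativity of $\B$: since $b^{(\sigma)}_E(\dots)\in\B$ need not be central and $E$, $\phi$ are not algebra homomorphisms, one cannot naively "factor out" cumulant values inside $E[\cdot]$, so the argument must be organized so that the $E(n,k)$-valued weights $u_{\ii,\jj}\p$ and the $\B$-valued cumulants live in genuinely independent tensor factors at the moment of separation — i.e. one applies $\phi$ first to land in $\C$ and only then uses Lemma \ref{vanishing} on the purely "quantum-semigroup" sum. Once that separation is set up cleanly, everything else is the routine moment–cumulant expansion together with the vanishing lemma.
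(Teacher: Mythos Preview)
Your proposal is correct and follows essentially the same route as the paper: expand $\phi(x_{\ii})=\phi(E[x_{\ii}])$ by the boolean moment--cumulant formula restricted to the appropriate class $D(k)\in\{I,I_h,I_b,I_2\}$ via Theorem~\ref{parititionanddistribution}, use identical distribution to replace $b_E^{(\pi)}(x_{\ii})$ by $b_E^{(\pi)}(x_1,\dots,x_1)$ for $\pi\le\ker\ii$, collapse the inner $\ii$-sum with Lemma~\ref{vanishing}, and reassemble. One cosmetic remark: the proposition is stated for a finite sequence $(x_1,\dots,x_n)$, so the excursion through the $k$-th extensions $E(n,k)$ and Definition~\ref{boolean stationary} is unnecessary --- the paper works directly with $B_g(n)$ --- but your reduction to the finite case recovers exactly that computation anyway.
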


\begin{proof} 
Suppose that the joint distribution of $\{x_i\}_{i=1,...,n}$ satisfies one of the conditions specified in the statement  of the proposition, and let $D(k)$ be the partition family associated to the corresponding quantum semigroups. Let $X_{\jj}=X_{j_1}\cdots X_{j_k} $, by Lemma \ref{vanishing} and \ref{parititionanddistribution}, we have 

\begin{align*}
\mu_{x_1,...,x_n}(\alpha_n (X_{\jj}))\p&=\sum\limits_{\ii\in[n]^k}\mu_{x_1,...,x_n}(X_{\ii})u_{\ii,\jj}\p\\
&=\sum\limits_{\ii\in[n]^k}\phi(x_{\ii})u_{\ii,\jj}\p\\
&=\sum\limits_{\ii\in[n]^k}\phi(\E[x_{\ii}])u_{\ii,\jj}\p \\
&=\sum\limits_{\ii\in[n]^k}\sum\limits_{\pi\in D(k)}\phi(b_E^{(\pi)}(x_{\ii}))u_{\ii,\jj}\p\\
&=\sum\limits_{\pi\in D(k)}\sum\limits_{\ii\in[n]^k}\phi(b_E^{(\pi)}(x_{\ii}))u_{\ii,\jj}\p\\
&=\sum\limits_{\pi\in D(k)}\sum\limits_{\substack{ \ii\in[n]^k\\ \pi\leq \text{ker}\,\ii }}\phi(b_E^{(\pi)}(x_{\ii}))u_{\ii,\jj}\p\\
&=\sum\limits_{\pi\in D(k)}\sum\limits_{\substack{ \ii\in[n]^k\\ \pi\leq \text{ker}\,\ii }}\phi(b_E^{(\pi)}(x_1,...,x_1))u_{\ii,\jj}\p\\
&=\sum\limits_{\substack{ \pi\in D(k)\\ \pi\leq \text{ker}\,\jj }}\phi(b_E^{(\pi)}(x_1,...,x_1))\p\\
&=\sum\limits_{\substack{ \pi\in D(k)\\ \pi\leq \text{ker}\,\jj }}\phi(b_E^{(\pi)}(x_{\jj}))\p\\
&=\phi(\E[x_{\jj}])\p\\
&=\phi(x_{\jj})p\\
&=\mu_{x_1,...,x_n}(X_{\jj})\p,
\end{align*}
which completes the proof.
\end{proof}

\section{Main result}
In this section, we will prove Theorem \ref{Classical} and Theorem \ref{Main theorem}.  
Then, we present an application of these theorems to easy groups $C_{s'}(n)$ and $C_{b'}(n)$, easy quantum groups $A_{s'}(n)$, $A_{b'}(n)$ and $ A_{b^{\#}}(n)$, Boolean quantum semigroups $B_{s'}(n)$ and $B_{b'}(n)$. 
We start with free independence, since the proof of the other parts can be easily derived from this case.
 
Given a $W^*$-probability space $(\A,\phi)$ such that $\phi$ is faithful. 
 Let $\{E(n)\}_{n\in\mathbb{N}}$ be a sequence of orthogonal Hopf algebras such that $A_s(n)\subseteq E(n)\subseteq A_o(n)$ for each $n$. 
 Let $(x_i)_{i\in\mathbb{N}}$ be a sequence of random variables which generate $A$. 
 Suppose that  the joint distribution of $(x_i)_{i\in\mathbb{N}}$ is $E(n)$-invariant for all $n$.  
 By Proposition \ref{Implication}, $(x_i)_{i\in\mathbb{N}}$ are $A_s(n)$-invariant for all $n$.  
 By K\"ostler and Speicher \cite{KS}, there exist a $W^*$-subalgebra $\B$ such that $1\subseteq \B\subseteq \A$  and a $\phi$-preserving conditional expectation $\E:\A\rightarrow \B$ such that $(x_i)_{i\in\mathbb{N}}$ are freely independent and identically distributed  with respect to $\E$.  
This is exactly  the statement 1 for the  free case. 
In addition, by Proposition 4.3 in \cite{KS} and Definition \ref{stationary}, the coaction invariant condition for $\phi$ can be extended to the conditional expectation $\E$, i.e. 
$$ \E[b_0 x_{i_1}b_{1}\cdots b_{k-1}x_{i_k}b_{k}]\otimes 1_{E(n)}= \sum\limits_{j_1,...,j_k=1}^n\E[b_0 x_{j_1}b_{1}\cdots b_{k-1}x_{j_k}b_{k}]\otimes u_{j_1,i_1}\cdots u_{j_k,i_k} $$
for $i_1,...,i_k\leq n$, where $\{u_{i,j}|i,j=1,...,n\}$ are generators of $E(n)$.

2.  Suppose that  $A_s(n) \subseteq E(n) \subseteq A_b(n)$ for all $n$ and there exists a number $k$ such that $E(k)\neq A_s(k)$.  Let $\{u_{i,j}\}_{i,j=1,...,k}$ be generators of $E(k)$. By proposition \ref{3.4}, there exists an index $i'$ such that
 $$\sum\limits_{l=1}^k u^m_{l,i'}\neq 1$$
 for all $m>2$.
 Without loss of generality, we assume that $i'=1$. It is sufficient to show that  $\kappa_l(x_1b_1,....,x_1b_l)=0$ for all $l\geq 3$, where $b_1,....,b_l\in B$.  We check it by induction on $l$. 
First,  we have that 
\begin{align*}
&\E[x_1 b_1\cdots x_1 b_l]\otimes 1_{E(n)}\\
=&\sum\limits_{\ii\in[k]^l}\E[x_{i_1}b_1\cdots x_{i_l}b_l]\otimes u_{\ii,1} \\
=&\sum\limits_{\ii\in[k]^l}\sum\limits_{\pi\in NC(l)}\kappa_\pi(x_{i_1}b_1,...,x_{i_l}b_l)\otimes u_{\ii,1}\\
=&\sum\limits_{\pi\in NC_b(l)}\sum\limits_{\ii\in[k]^l}\kappa_\pi(x_{i_1}b_1,...,x_{i_l}b_l)\otimes u_{\ii,1}+\sum\limits_{\pi\in NC(l)\setminus NC_b(l)}\sum\limits_{\ii\in[k]^l}\kappa_\pi(x_{i_1}b_1,...,x_{i_l}b_l)\otimes u_{\ii,1}\\
=&\sum\limits_{\pi\in NC_b(l)}\sum\limits_{\substack{ \ii\in[k]^l\\ \pi\leq \text{ker}\,\ii }}\kappa_\pi(x_{i_1}b_1,...,x_{i_l}b_l)\otimes u_{\ii,1}+\sum\limits_{\pi\in NC(l)\setminus NC_b(l)}\sum\limits_{\substack{ \ii\in[k]^l\\ \pi\leq \text{ker}\,\ii }}\kappa_\pi(x_{i_1}b_1,...,x_{i_l}b_l)\otimes u_{\ii,1}\\
=&\sum\limits_{\pi\in NC_b(l)}\sum\limits_{\substack{ \ii\in[k]^l\\ \pi\leq \text{ker}\,\ii }}\kappa_\pi(x_1b_1,...,x_1b_l)\otimes u_{\ii,1}+\sum\limits_{\pi\in NC(l)\setminus NC_b(l)}\sum\limits_{\substack{ \ii\in[k]^l\\ \pi\leq \text{ker}\,\ii }}\kappa_\pi(x_{1}b_1,...,x_{1}b_l)\otimes u_{\ii,1}\\
=&\sum\limits_{ \pi\in NC_b(l)}\kappa_\pi(x_1b_1,...,x_1b_l)\otimes 1_{E(n)}+\sum\limits_{\pi\in NC(l)\setminus NC_b(l)}\sum\limits_{\substack{ \ii\in[k]^l\\ \pi\leq \text{ker}\,\ii }}\kappa_\pi(x_{1}b_1,...,x_{1}b_l)\otimes u_{\ii,1}.
\end{align*}

The first term of the last equality follows because $E(n)$ is a quotient algebra of $A_b(n)$.
On the other hand, we have that 
$$\E[x_1 b_1,...,x_1 b_l]\otimes 1_{E(n)}= \sum\limits_{ \pi\in NC_b(k)}\kappa_\pi(x_1b_1,...,x_1b_l)\otimes 1_{E(n)}+\sum\limits_{\pi\in NC(l)\setminus NC_b(l)}\kappa_\pi(x_1b_1,...,x_1b_l)\otimes 1_{E(n)}.$$ 
It follows that
\begin{equation}\label{2}
\sum\limits_{\pi\in NC(l)\setminus NC_b(l)}\sum\limits_{\substack{ \ii\in[k]^l\\ \pi\leq \text{ker}\,\ii }}\kappa_\pi(x_{1}b_1,...,x_{1}b_l)\otimes u_{\ii,1}=\sum\limits_{\pi\in NC(l)\setminus NC_b(l)}\kappa_\pi(x_1b_1,...,x_1b_l)\otimes 1_{E(n)}.
\end{equation}
When $l=3$, we have $NC(3)\setminus NC_b(3)=\{1_3\}$. Then
$$\sum\limits_{\substack{ \ii\in[n]^k\\ \pi\leq \text{ker}\,1_3}}\kappa_{1_3}(x_{1}b_1,...,x_{1}b_3)\otimes u_{\ii,1}=\kappa_{1_3}(x_1b_1,...,x_1b_3)\otimes 1_{E(n)},$$
which is
$$\kappa_{1_3}(x_{1}b_1,...,x_{1}b_k)\otimes (\sum\limits_{l=1}^{k}u^3_{l,1}-1_{E(n)})=0.$$
Therefore, $\kappa_{1_3}(x_{1}b_1,...,x_{1}b_3)=0.$
Suppose $\kappa_{1_l}(x_{1}b_1,...,x_{1}b_l)=0$ for $3\leq l\leq m$, then for $\pi\in NC(m+1)$, $\kappa_\pi(x_{i_1}b_1,...,x_{1}b_{m+1})=0$ if $\pi$ contains a block whose size is between $3$ and $m$.  Each partition $\pi\in NC(m+1)\setminus NC_b(m+1)$ contains at least one block whose size is greater than $2$. Therefore, for $\pi\in NC(m+1)\setminus NC_b(m+1)$, $\kappa_{\pi}(x_{1}b_1,...,x_{1}b_k)=0$ if $\pi\neq 1_{m+1}$. Hence, equation \ref{2} becomes
$$\kappa_{1_{m+1}}(x_{1}b_1,...,x_{1}b_{m+1})\otimes (\sum\limits_{l=1}^{k}u^{m+1}_{l,1}-1_{E(n)})=0$$
which implies 
$$ \kappa_{1_{m+1}}(x_{1}b_1,...,x_{1}b_{m+1})=0,$$
for all $b_1,...,b_{m+1}\in \B$. The proof is complete.

3.  
Suppose that  $A_s(n) \subseteq E(n) \subseteq A_h(n)$ for all $n$ and there exists a $k$ such that $E(k)\neq A_s(k)$.  Let $\{u_{i,j}\}_{i,j=1,...,k}$ be generators of $E(k)$. By proposition \ref{3.4}, $\exists\, i'$ such that 
 $$\sum\limits_{l=1}^k u^m_{l,i'}\neq 1,$$
 for all odd numbers $m$.  Without loss of  generality, we assume that $i'=1$.
We need to  show that  $\kappa_l(x_1b_1,....,x_1b_l)=0$ for all add numbers $l$ where $b_1,....,b_l\in B$.   We prove this by induction on $l$. 
We have that 
$$\begin{array}{rcl}
&&\E[x_1 b_1\cdots x_1 b_l]\otimes 1_{E(n)}\\
&=&\sum\limits_{\pi\in NC_h(l)}\sum\limits_{\substack{ \ii\in[k]^l\\ \pi\leq \text{ker}\,\ii }}\kappa_\pi(x_1b_1,...,x_1b_l)\otimes u_{\ii,1}+\sum\limits_{\pi\in NC(l)\setminus NC_h(l)}\sum\limits_{\substack{ \ii\in[k]^l\\ \pi\leq \text{ker}\,\ii }}\kappa_\pi(x_{1}b_1,...,x_{1}b_l)\otimes u_{\ii,1}\\
&=&\sum\limits_{ \pi\in NC_h(l)}\kappa_\pi(x_1b_1,...,x_1b_l)\otimes 1_{E(n)}+\sum\limits_{\pi\in NC(l)\setminus NC_h(l)}\sum\limits_{\substack{ \ii\in[k]^l\\ \pi\leq \text{ker}\,\ii }}\kappa_\pi(x_{1}b_1,...,x_{1}b_l)\otimes u_{\ii,1}.
\end{array}
$$
The first term of the last equality follows because $E(n)$ is a quotient algebra of $A_h(n)$. On the other hand, we have
$$\E[x_1 b_1,...,x_1 b_l]\otimes 1_{E(n)}= \sum\limits_{ \pi\in NC_h(l)}\kappa_\pi(x_1b_1,...,x_1b_l)\otimes 1_{E(n)}+\sum\limits_{\pi\in NC(l)\setminus NC_h(l)}\kappa_\pi(x_1b_1,...,x_1b_l)\otimes 1_{E(n)}.$$ 
Therefore, 
\begin{equation}\label{3}
\sum\limits_{\pi\in NC(l)\setminus NC_h(l)}\sum\limits_{\substack{ \ii\in[k]^l\\ \pi\leq \text{ker}\,\ii }}\kappa_\pi(x_{1}b_1,...,x_{1}b_l)\otimes u_{\ii,1}=\sum\limits_{\pi\in NC(l)\setminus NC_h(l)}\kappa_\pi(x_1b_1,...,x_1b_l)\otimes 1_{E(n)}.
\end{equation}
When $l=1$, we have $NC(1)\setminus NC_h(1)=\{1_1\}$, then
$$\kappa^{(1)}(x_{1}b_1)\otimes (\sum\limits_{l=1}^{k}u_{l,1}-1_{E(n)})=0.$$
Therefore, $\kappa_{1_1}(x_{1}b_1)=0.$
Suppose $\kappa_{1_l}(x_{1}b_1,...,x_{1}b_l)=0$ for odd numbers $ l\leq 2m$, then for $\pi\in NC(2m+1)$, $\kappa_\pi(x_{i_1}b_1,...,x_{1}b_{2m+1})=0$ if $\pi$ contains a block whose size is an odd number less than  $2m$.   Each partition $\pi\in NC(2m+1)$ contains at least one block whose size is odd. Therefore, for $\pi\in NC(2m+1)$, $\kappa_{\pi}(x_{1}b_1,...,x_{1}b_{2m+1})=0$ if $\pi\neq 1_{2m+1}$. Hence, equation \ref{3} becomes
$$\kappa_{1_{2m+1}}(x_{1}b_1,...,x_{1}b_{2m+1})\otimes (\sum\limits_{l=1}^{k}u^{2m+1}_{l,1}-1_{E(n)})=0$$
which implies 
$$ \kappa_{1_{2m+1}}(x_{1}b_1,...,x_{1}b_{2m+1})=0,$$
for all $b_1,...,b_{m+1}\in \B$. The proof is complete.

4. If  there exist  $k_1, k_2$ such that $E(k_1)\not\subseteq A_h(k_1)$  and $E(k_2)\not\subseteq A_b(k_2)$, by Case 2 and Case 3, the only non-vanishing cumulants are pair partition cumulants.  The proof is done.

The proof of Theorem \ref{Classical}  follows the  free case replacing noncrossing partitions by arbitrary partitions.  The existence of the conditional expectation is a well known result in classical probability, see \cite{Ka}.

The proof of the Boolean case of Theorem \ref{Main theorem} is slightly different.   
For Boolean de Finetti type theorem, we need to consider random variables in a $W^*$-probability space with a non-degenerated state $\phi$ \cite{Liu}. 
 In addition, we assume that $\A$ is generated by a sequence of random variables $(x_i)_{i\in\mathbb{N}}$. 
 Let $\{E(n)\}_{n\in\mathbb{N}}$ be a sequence of orthogonal Boolean quantum groups such that $B_s(n)\subseteq E(n)\subseteq B_o(n)$ for each $n$. 
 If the joint distribution of $(x_i)_{i\in\mathbb{N}}$ is $E(n)$-invariant, then the joint distribution of $(x_i)_{i\in\mathbb{N}}$ is $B_s(n)$ invariant for all $n$. 
 By the main result in \cite{Liu}, there are a $W^*$-subalgebra $\B$(not necessarily contain the unit of $\A$) such that $\B\subseteq \A$ and a $\phi$-preserving conditional expectation $\E:\A\rightarrow \B$ such that $(x_i)_{i\in\mathbb{N}}$ are Boolean independent and identically distributed  with respect to $\E$. 
 In this part of proof, we assume that $\B$ does not contain $1_{\A}$. 
Then, the tail algebra
$$\B=\bigcap\limits_{n=1}^\infty W^*\{x_k|k\geq n\},$$
where $W^*\{x_k|k\geq n\}$ is the WOT closure of the non-unital  algebra generated by $\{x_k|k\geq n\}$. 
We call $\B$ the non-unital tail algebra of $\{x_i\}_{i\in\mathbb{N}}$.
Unlike the proof for free independence, the coaction invariant conditions for $\phi$ cannot be extended to the conditional expectation $E$ directly.  Actually, we have a stronger statement.

\begin{proposition}\label{Boolean property} \normalfont
Let $(\A,\phi)$ be a $W^*$-probability space and $(x_i)_{i\in\mathbb{N}}$ be a sequence of selfadjoint random variables which generate $\A$ as a von Neumann algebra and the unit of $\A$ is  contained in the WOT closure of the non-unital algebra generated by $(x_i)_{i\in\mathbb{N}}$ . 
Let $E(n)$ be a sequence of Boolean orthogonal quantum semigroups such that $B_s(n)\subseteq E(n)\subseteq B_o(n)$. 
If $(x_i)_{i\in\mathbb{N}}$ are $E(n)$-invariant for all $n$, then there exists a $\phi$-preserving conditional expectation $\E:\A\rightarrow \B$, where $\B$ is the non-unital tail algebra of $\{x_i\}_{i\in\mathbb{N}}$, such that $(x_i)_{i\in\mathbb{N}}$ is Boolean independent with respect to $\E$. Let $\A_n$ be the non-unital algebra generated by $\{x_i\}_{i\in\mathbb{N}}$. 
We have that 
$$\E[a_1ba_2]=\E[a_1]b\E[a_2],$$
where $ a_1, a_2 \in\A_n$ for some $n$ and $b\in\B$.
 Let $\{u_{i,j}\}_{i,j=1,...,n}$ and $\p$ be the generators of $E(n)$.
 We  have that 
$$ \E[ x_{i_1}\cdots x_{i_k}]\otimes \p= \sum\limits_{j_1,...,j_k=1}^n\E[x_{j_1}\cdots x_{j_k}]\otimes u_{j_1,i_1}\cdots u_{j_k,i_k} \p,$$
for $i_1,...,i_k\leq n$.
\end{proposition}
\begin{proof}
The existence of $\E$ is shown in \cite{Liu}. We turn to prove the two equations of this proposition.
Given $ a_1, a_2 \in\A_n$ for some $n$ and $b\in\B$, by assumption, $b$ is contained in the $W^*$-closure of the non-unital algebra generated by $\{x_i|i>n\}$. 
By Kaplansky theorem, there exist a net of bounded elements $(y_i)_{i\in\mathbb{\omega}}$ such that $(y_i)_{i\in\mathbb{\omega}}$ are contained in  the non-unital algebra generated by $\{x_i|i>n\}$ and  converge to $b$ in strong operator topology. 
Therefore, by normality of $\E$, we have
$$\E[a_1ba_2]=\lim\limits_{i\rightarrow\infty}\E[a_1y_ia_2]=\lim\limits_{i\rightarrow\infty}\E[a_1]\E[y_i]\E[a_2]=\E[a_1]b\E[a_2] ,$$
where the second equality follows because $(x_i)_{i\in\mathbb{N}}$  are Boolean independent with respect to $\E$.
The second equation of this proposition can be checked pointwisely.  
Let $a_1,a_2\in \A_{m}$ for some $m$.  
In \cite{Liu}, it was shown that there exists a normal homomorphism $\alpha:\A\rightarrow\A$ such that $\alpha(x_i)=x_{i+1}$ for all $i\in\mathbb{N}$.  By the proof of Lemma 6.7 in \cite{Liu} and the assumption that $\{x_i\}_{i\in\mathbb{N}}$ is $E(n)$-invariant, we have 
\begin{align*}
&\phi (a_1\E[ x_{i_1}\cdots x_{i_k}] a_2)\otimes \p\\
=&\lim\limits_{l\rightarrow \infty,l>m}\phi (a_1\alpha^l( x_{i_1}\cdots x_{i_k}) a_2)\otimes \p\\
=&\lim\limits_{l\rightarrow \infty,l>m}\phi ( \alpha^n(a_1) x_{i_1}\cdots x_{i_k}  \alpha^n(a_2))\otimes \p\\
=& \lim\limits_{l\rightarrow \infty,l>m}(\phi( \alpha^n (a_1)\sum\limits_{j_1,...,j_k=1}^n x_{j_1}\cdots x_{j_k}\alpha^n(a_2))\otimes u_{j_1,i_1}\cdots u_{j_k,i_k} \p\\
=& \lim\limits_{l\rightarrow \infty,l>m}\phi(a_1\alpha^l(\sum\limits_{j_1,...,j_k=1}^n x_{j_1}\cdots x_{j_k})a_2)\otimes u_{j_1,i_1}\cdots u_{j_k,i_k} \p\\
=& \sum\limits_{j_1,...,j_k=1}^n\phi(a_1\E[x_{j_1}\cdots x_{j_k}]a_2)\otimes u_{j_1,i_1}\cdots u_{j_k,i_k} \p.\\
\end{align*}
Since $a_1,a_2$ are arbitrarily chosen from the dense set $\bigcup\limits_{n\rightarrow\infty}\A_n$ of $\A$, the proof is done.
\end{proof}

Now, we are ready to prove  Boolean case of Theorem \ref{Main theorem}\\
\begin{proof}
Statement 1.  This is  the Boolean de Finetti theorem in \cite{Liu}.

Statement 2.   As  the free case, we need to show that $b_\E^{(l)}(x_1b_1,....,x_1b_l)=0$ for all $l\geq 3$ where $b_1,....,b_l\in B\cup\{\mathbb{C}1_{\A}\}$. By proposition \ref{Boolean property}, we have 
$$\begin{array}{rcl}

&&\E[x_{\ii_1}b_1x_{\ii_2}\cdots b_{n-1}x_{\ii_m}]\\
&=&\E[x_{\ii_1}]b_1 \E[x_{\ii_2}]\cdots b_{n-1}\E[x_{\ii_m}]\\
&=&\sum\limits_{\pi_1\in I(k_1)}b_\E^{(\pi_1)}(x_{i^{(1)}_1},...x_{i^{(1)}_{k_1}}) b_1 \sum\limits_{\pi_2\in I(k_2)}b_\E^{(\pi_2)}(x_{i^{(2)}_1},...x_{i^{(2)}_{k_2}})\cdots b_{n-1}\sum\limits_{\pi_m\in I(k_m)}b_\E^{(\pi_m)}(x_{i^{(m)}_1},...x_{i^{(m)}_{k_m}})\\
&=&\sum\limits_{\pi\in I(k_1)\times I(k_2)\times\cdots \times I(k_m)}b_\E^{(\pi)}(x_{i^{(1)}_1},...x_{i^{(1)}_{k_1}},b_1x_{i^{(2)}_1},...x_{i^{(2)}_{k_2}},\cdots,b_{n-1}x_{i^{(m)}_1},...x_{i^{(m)}_{k_m}}),\\
\end{array}$$
where $\ii_l=(i^{(l)}_1,...,i^{(l)}_{k_l})\in[n]^{k_l}$ for all $l=1,...,m$ for some $n$ and $b_1,...,b_m\in\B$.  Therefore, to finish the proof, we need to show that $b_\E^{(k)}(x_1,....,x_1)=0$ for all $l\geq 3$.   The rest of the poof is similar to  the free case:  \\
Let $\{u_{i,j}\}_{i,j=1,...,k}$'s and $\p$ be the generators of $E(k)$. First,  by Proposition \ref{Boolean property}, we have 
\begin{align*}
&\E[x_1\cdots x_1]\otimes \p\\
=&\sum\limits_{\ii\in[k]^l}\E[x_\ii]\otimes u_{\ii,1}\p \\
=&\sum\limits_{\ii\in[k]^l}\sum\limits_{\pi\in I(l)}b_\E^{(\pi)}(x_\ii)\otimes u_{\ii,1}\\
=&\sum\limits_{\pi\in I_b(l)}\sum\limits_{\ii\in[k]^l}b_\E^{(\pi)}(x_{i_1},...,x_{i_l})\otimes u_{\ii,1}\p+\sum\limits_{\pi\in I(l)\setminus I_b(l)}\sum\limits_{\ii\in[k]^l}b_\E^{(\pi)}(x_{i_1},...,x_{i_l})\otimes u_{\ii,1}\p\\
=&\sum\limits_{\pi\in I_b(l)}\sum\limits_{\substack{ \ii\in[k]^l\\ \pi\leq \text{ker}\,\ii }}b_\E^{(\pi)}(x_{i_1},...,x_{i_l})\otimes u_{\ii,1}\p+\sum\limits_{\pi\in I(l)\setminus I_b(l)}\sum\limits_{\substack{ \ii\in[k]^l\\ \pi\leq \text{ker}\,\ii }}b_\E^{(\pi)}(x_{i_1},...,x_{i_l})\otimes u_{\ii,1}\p\\
=&\sum\limits_{\pi\in I_b(l)}\sum\limits_{\substack{ \ii\in[k]^l\\ \pi\leq \text{ker}\,\ii }}b_\E^{(\pi)}(x_1,...,x_1)\otimes u_{\ii,1}\p+\sum\limits_{\pi\in I(l)\setminus I_b(l)}\sum\limits_{\substack{ \ii\in[k]^l\\ \pi\leq \text{ker}\,\ii }}b_\E^{(\pi)}(x_{1},...,x_{1})\otimes u_{\ii,1}\p\\
=&\sum\limits_{ \pi\in I_b(l)}b_\E^{(\pi)}(x_1b_1,...,x_1b_l)\otimes \p+\sum\limits_{\pi\in I(l)\setminus I_b(l)}\sum\limits_{\substack{ \ii\in[k]^l\\ \pi\leq \text{ker}\,\ii }}b_\E^{(\pi)}(x_{1},...,x_{1})\otimes u_{\ii,1}\p.\\
\end{align*}

The first term of the last equality follows because $E(n)$ is a quotient algebra of $B_b(n)$.
On the other hand
$$\E[x_1 ,...,x_1 ]\otimes \p= \sum\limits_{ \pi\in I_b(k)}b_\E^{(\pi)}(x_1,...,x_1)\otimes \p+\sum\limits_{\pi\in I(l)\setminus I_b(l)}b_\E^{(\pi)}(x_1,...,x_1)\otimes \p.$$ 
Therefore, 
\begin{equation}\label{4}
\sum\limits_{\pi\in I(l)\setminus I_b(l)}\sum\limits_{\substack{ \ii\in[k]^l\\ \pi\leq \text{ker}\,\ii }}b_\E^{(\pi)}(x_1,...,x_1)\otimes u_{\ii,1}\p=\sum\limits_{\pi\in I(l)\setminus I_b(l)}b_\E^{(\pi)}(x_1,...,x_1)\otimes \p.
\end{equation}
By assumption, $E(k)$ has a quotient algebra $E'(k)$ such that $A_s(k)\not\subseteq E'(k)\subseteq A_n(n)$. 
Let $\{u'_{i,j}\}$ be the generators of $E'(k)$. Then, there exists a $C^*$-homomorphism $\Psi:E(k)\rightarrow E'(k)$ such that 
$$ \Psi(u_{i,j})=u_{i,j}' \,\,\, \text{for all}\,\, i,j=1,...,k,\,\,\,\,\text{and}\,\,\,\Psi(\p)=1_{E'(k)}.$$ 
Without loss of generality, by proposition \ref{3.4}, we can assume that  
 $$\sum\limits_{l=1}^k (u'_{l,1})^m\neq 1,$$
 for all $m>2$.
 Let $id\otimes \Psi$ acts on equation \ref{4}. Then, we get

\begin{equation}\label{4}
\sum\limits_{\pi\in I(l)\setminus I_b(l)}\sum\limits_{\substack{ \ii\in[k]^l\\ \pi\leq \text{ker}\,\ii }}b_\E^{(\pi)}(x_{1},...,x_{1})\otimes u'_{\ii,1}=\sum\limits_{\pi\in I(l)\setminus I_b(l)}b_\E^{(\pi)}(x_1,...,x_1)\otimes 1_{E'(k)}.
\end{equation}

When $l=3$, we have $I(3)\setminus I_b(3)=\{1_3\}$, then
$$\sum\limits_{\substack{ \ii\in[n]^k\\ \pi\leq \text{ker}\,1_3}}b_\E^{(3)}(x_{1},...,x_{1})\otimes u'_{\ii,1}=b_\E^{(3)}(x_1,...,x_1)\otimes 1_{E'(k)},$$
which is
$$\kappa_{1_3}(x_{1},...,x_{1})\otimes (\sum\limits_{l=1}^{k}u'^3_{l,1}-1_{E'(k)})=0.$$
Therefore, $b_\E^{(3)}(x_{1},...,x_{1})=0.$
Suppose $b_\E^{(l)}(x_{1}b_1,...,x_{1}b_l)=0$ for $3\leq l\leq m$. 
Then, for $\pi\in I(m+1)$, $b_E^{(\pi)}(x_1,...,x_{1})=0$ if $\pi$ contains a block whose size is between $3$ and $m$.  Each partition $\pi\in I(m+1)\setminus I_b(m+1)$ contains at least one block whose size is greater than $2$. Therefore, for $\pi\in I(m+1)\setminus I_b(m+1)$, $b_E^{(\pi)}(x_1,...,x_{1})=0$ if $\pi\neq 1_{m+1}$. Hence, equation \ref{2} becomes
$$b_E^{(m+1)}(x_{1},...,x_{1})\otimes (\sum\limits_{l=1}^{k}u'^{m+1}_{l,1}-1_{E'(k)})=0$$
which implies 
$$ b_E^{(m+1)}(x_{1},...,x_{1})=0.$$ The proof of Statement 2 is complete.

Similarly, by the proof  of Statement 3 and Statement 4 for free case,  Statement 3 and Statement 4 for Boolean independence are also true.
\end{proof}

\begin{remark}\normalfont
 Our general de Finetti theorem for Boolean independence is not complete since we know very little about the classification of Boolean quantum semigroups. 
\end{remark}

According the diagrams in Section 3 and Section 4,  we have  the following relations.
\begin{itemize}
\item[1.]$C_s(n)\subseteq C_{s'}(n)\subseteq C_b(n) $ 
for all n,  and $C_s(n)\neq C_{s'}(n)$ for $n>3$.
\item[2.] $C_{b'}(n)\not\subseteq C_{h}(n)$ and $C_{b'}(n)\not\subseteq C_b(n)$ for $n>3$.
\item[3.] $A_s{n}\subseteq A_{s'}(n)\subseteq A_b(n) $ 
for all n,  and $A_s(n)\neq A_{s'}(n)$ for $n>3$.
\item[4.] $A_{b'}(n),A_{b^\#}(n)\not\subseteq A_{h}(n)$ and $A_{b'}(n),A_{b^\#}(n)\not\subseteq A_b(n)$  for $n>3$.
\item[5.] $B_s(n)\subseteq B_{s'}(n)\subseteq B_b(n)$
for all n,  and $B_s(n)\neq B_{s'}(n)$ for $n>3$. Moreover $A_{s'}(n)$ is a quotient algebra of $B_{s'(n)}$
\item[6.] $A_{b'}$ is a quotient algebra of $B_{b'}(n)$  for $n>3$.
\end{itemize}

By Theorem\ref{Classical}, we have the following corollary for classical independence.
\begin{corollary}\normalfont
Let $(\Omega,\Sigma, \mu)$ be a classical probability space and  $(x_i)_{i\in\mathbb{N}}$ be a sequence of real-valued  random variables such that $x_i\in\bigcap\limits_{1\leq p<\infty} L^{p}(\Omega,d\mu)$ for all $i$.
 Let $\A$ be the algebra of all random variables and $\phi$ is the expectation. Assume that $(x_i)_{i\in\mathbb{N}}$ generate $\A$.
\begin{itemize}
\item[1.]   If the joint distribution of $(x_i)_{i\in\mathbb{N}}$ is $C_{s'}(n)$ invariant for all $n\in\mathbb{N}$, then there exist a subalgebra $\B$  and a $\phi$-preserving conditional expectation $\E:\A\rightarrow \B$ such that  such that $(x_i)_{i\in\mathbb{N}}$ are conditionally independent and have  identically symmetric distribution  with respect to $\E$.
\item[2.]  If the joint distribution of $(x_i)_{i\in\mathbb{N}}$ is $C_{b'}(n)$ invariant for all $n\in\mathbb{N}$, then there exist a subalgebra $ \B$  and a $\phi$-preserving conditional expectation $\E:\A\rightarrow \B$ such that such that $(x_i)_{i\in\mathbb{N}}$ are conditionally independent and have  centered Gaussian  distribution  with respect to $\E$.
\end{itemize}
\end{corollary}

By Theorem \ref{Main theorem}, we have the following result for free and Boolean independence.

\begin{corollary}\normalfont
Let $(\A, \phi)$ be a $W^*$-probability space and $(x_i)_{i\in\mathbb{N}}$ be a sequence of random variables which generate $\A$. 
\begin{itemize}
\item Free case:   Suppose that $\phi$ is faithful.  
\begin{itemize}
\item[1.]  If the joint distribution of $(x_i)_{i\in\mathbb{N}}$ is $A_{s'}(n)$ invariant for all $n\in\mathbb{N}$, then there exist a $W^*$-subalgebra $1\subseteq \B\subseteq \A$  and a $\phi$-preserving conditional expectation $\E:\A\rightarrow \B$ such that $(x_i)_{i\in\mathbb{N}}$ are freely independent and have  identically symmetric distribution  with respect to $\E$.
\item[2.]   If the joint distribution of $(x_i)_{i\in\mathbb{N}}$ is $A_{b'}(n)$-invariant for all $n\in\mathbb{N}$, then exist  a $W^*$-subalgebra $1\subseteq \B\subseteq \A$  and a $\phi$-preserving conditional expectation $\E:\A\rightarrow \B$ such that $(x_i)_{i\in\mathbb{N}}$ are freely independent and have  centered semicircular  distribution  with respect to $\E$.
\item[3.]   If the joint distribution of $(x_i)_{i\in\mathbb{N}}$ is $A_{b^\#}(n)$-invariant for all $n\in\mathbb{N}$, then exist  a $W^*$-subalgebra $1\subseteq \B\subseteq \A$  and a $\phi$-preserving conditional expectation $\E:\A\rightarrow \B$ such that $(x_i)_{i\in\mathbb{N}}$ are freely independent and have  centered semicircular  distribution  with respect to $\E$.
\end{itemize}

\item Boolean case: 
If $\phi$ is non-degenerated. 
\begin{itemize}
\item[1.]  If the joint distribution of $(x_i)_{i\in\mathbb{N}}$ is $B_{s'}(n)$ invariant for all $n\in\mathbb{N}$, then there are a $W^*$-subalgebra(not necessarily contain the unit of $\A$) $\B\subseteq \A$ and a $\phi$-preserving conditional expectation $\E:\A\rightarrow \B$ such that $(x_i)_{i\in\mathbb{N}}$ are Boolean independent and have  identically symmetric distribution  with respect to $\E$.

\item[2.]  If the joint distribution of $(x_i)_{i\in\mathbb{N}}$ is $B_{b'}(n)$ invariant for all $n\in\mathbb{N}$, then there are a $W^*$-subalgebra(not necessarily contain the unit of $\A$) $\B\subseteq \A$ and a $\phi$-preserving conditional expectation $\E:\A\rightarrow \B$ such that $(x_i)_{i\in\mathbb{N}}$ are conditionally independent and have  centered Bernoulli distribution  with respect to $\E$.
\end{itemize}

\end{itemize}
\end{corollary}

\noindent{\bf Acknowledgment} The author would like to thank his thesis advisor, D.-V. Voiculescu, for his continued guidance and support while completing this project.While working on this paper, the author was supported in part by funds from NSF grant DMS-1301727.

\bibliographystyle{plain}

\bibliography{references}

\vspace{1cm}

\noindent Department of Mathematics\\
Indiana University	\\
Bloomington, IN 47401, USA\\
E-MAIL: liuweih@indiana.edu \\

\end{document}